\newcommand{\Var}{\mathop \mathrm{Var}}
\newcommand{\llim}{\mathop \mathrm{l.i.m.}}
\newcommand{\Plim}{\mathop \mathrm{\mathds{P}\!\!-\!\!lim}}
\newcommand{\Lip}{\mathop \mathrm{Lip}}
\newtheorem{Proposition}{Proposition}
\theoremstyle{definition}
\newtheorem{Example}{Example}
\newtheorem{Definition}{Definition}
\theoremstyle{plain}
\newtheorem{Lemma}{Lemma}
\theoremstyle{plain}
\newtheorem{Theorem}{Theorem}
\theoremstyle{condition}
\theoremstyle{plain}
\theoremstyle{remark}
\newtheorem{Remark}{Remark}
\newcommand{\veps}{\varepsilon}
\newcommand{\vp}{\varphi}
\newcommand{\vf}{\varphi}
\newcommand{\R}{\mathds{R}^d}
\renewcommand{\P}{\mathds{P}}
\newcommand{\E}{\mathds{E}}
\newcommand{\1}{1\!\!\!\,{\mathrm I}}
\begin{document}

\title[On differentiability of stochastic flow]{On differentiability of stochastic flow for a multidimensional SDE with discontinuous drift}

\author{Olga V. Aryasova}
\address{Institute of Geophysics, National Academy of Sciences of Ukraine,
Palladin pr. 32, 03680, Kiev-142, Ukraine}
\email{oaryasova@gmail.com}
\author{Andrey Yu. Pilipenko}
\address{Institute of Mathematics,  National Academy of Sciences of
Ukraine, Tereshchenkivska str. 3, 01601, Kiev, Ukraine; National Technical University of Ukraine "KPI", Kiev, Ukraine}
\email{pilipenko.ay@yandex.ua}

\subjclass[2000]{60J65, 60H10}
 \dedicatory{}

\keywords{Stochastic flow; Continuous additive functional; Differentiability with
respect to initial data}

\begin{abstract}
We consider a $d$-dimensional SDE with an identity diffusion
matrix and a drift vector being a vector function of bounded
variation. We give a representation for the derivative of the solution with respect to the initial data.
\end{abstract}

\maketitle \thispagestyle{empty}
\section*{Introduction}
Consider an SDE of the form
\begin{equation}\label{eq_main}
\left\{
\begin{aligned}
d\vp_t(x)&=a(\vp_t(x))dt+dw_t,\\
\vp_0(x)&=x,\\
\end{aligned}\right.
\end{equation}
where $x\in\mathbb{R}^d, \ d\geq1, \ (w_t)_{t\geq0}$ is a $d$-dimensional Wiener process, $a=(a^1,\dots,a^d)$ is a bounded measurable mapping from $\R$ to $\R$.

According to \cite{Veretennikov81} there exists a unique strong solution to equation (\ref{eq_main}).

It is well known that if $a$ is  continuously differentiable and
its derivative is bounded, then equation (\ref{eq_main}) generates
a flow of diffeomorphisms. It turns out that this condition can be
essentially reduced \cite{Flandoli+10}, and a flow of
diffeomorphisms exists in the case of possible unbounded
H\"{o}lder continuous drift vector $a$. Recently the case of discontinuous drift was studied in \cite{Fedrizzi+13b, Fedrizzi+13a, MeyerBrandis+13, Mohammed+12} and the weak differentiability of the solution to (\ref{eq_main}) was proved  under rather weak assumptions on the drift. The authors of \cite{Fedrizzi+13b} consider a drift vector belonging to $L_q(0,T; L_p(\mathds{R}^d))$ for some $p,q $ such that
$$
p\geq2, \ q>2, \ \frac dp+ \frac 2q<1.
$$
They establish the existence of the G\^{a}teaux derivative in $L_2(\Omega\times[0,T];\R)$.
In \cite{Mohammed+12} it is proved that for a bounded measurable drift vector $a$ the solution belongs to the space $L^2(\Omega; W^{1,p}(U))$ for each $t\in\R, p>1,$ and any open and bounded $U\in\R$.  The Malliavin calculus is used in \cite{MeyerBrandis+13, Mohammed+12}.

The aim of our paper is to find a natural representation of the derivative $\nabla_x\vp_t(x)$ if $a$
is discontinuous.
We suppose that for $1\leq i \leq d$,  $a^i$ is a function of bounded
variation on $\R$, i.e. for each $1\leq j\leq d, $ the generalized
derivative $\mu^{ij}=\frac{\partial a^i}{\partial x_j}$ is a
signed measure on $\R$. Let $\mu^{ij,+},\mu^{ij,-}$ be measures from the Hahn-Jordan
decomposition $\mu^{ij}=\mu^{ij,+}-\mu^{ij,-}$. Denote
$|\mu^{ij}|=\mu^{ij,+}+\mu^{ij,-}$. Assume that for all
$1\leq i,j \leq d,$
$|\mu^{ij}|$ is a measure of Kato's class, i.e.
\begin{equation*}
\lim_{t\downarrow 0}\sup_{x\in\R}\int_{\R}\left(\int_0^t\frac{1}{(2\pi s)^{d/2}}\exp\left\{-\frac{\|y-x\|^2}{2s}\right\}ds\right)|\mu^{ij}|(dy)=0.
\end{equation*}
The condition we impose on the drift is more restrictive than that of \cite{Fedrizzi+13b, Mohammed+12}, but it allows us to obtain a representation for the derivative in terms of intrinsic parameters of the initial equation (see Theorem \ref{Theorem_main}). Our methods are different from those used in the  papers cited above.
We show that the derivative $Y_t(x)$ in $x$ is a solution of the following integral equation
$$
Y_t(x)=E+\int_0^tdA_s(\varphi(x))Y_s(x),
$$
where $A_t(\varphi(x))$ is a  continuous additive functional of
the process $(\varphi_t(x))_{t\geq0}$, which is equal to
$\int_0^t\nabla a(\varphi_s(x))ds$ if $a$ is differentiable, $E$
is the $d$-dimensional identity matrix.
This representation is a natural generalization of the expression for the derivative in the smooth case.

In the one-dimensional case (see \cite{Aryasova+12, Attanasio10}) the derivative was represented via the local time of the process. It is well known that the solution of (\ref{eq_main}) does not have a local time at a point in the multidimensional situation. We use continuous additive functionals for the representation of the derivative.  This method can be considered as a generalization of the local time approach to the multidimensional case.

Our method can be used in the case of non-constant diffusion and.

The paper is organized as follows. In Section
\ref{Section_W_functionals} we collect some definitions and
statements concerning continuous additive functionals. The main result of the paper
is formulated in Section \ref{Section_main} (see Theorem
\ref{Theorem_main}). For the proof we approximate equation
(\ref{eq_main}) by equations with smooth coefficients. The
definitions and properties of approximating equations are given in
Sections \ref{Section_Approximation},
\ref{Section_Convergence_Derivatives}. We prove Theorem
\ref{Theorem_main} in Section \ref{Section_Proof}.

\section{Preliminaries: W-functionals}\label{Section_W_functionals}
In this section we collect some facts about continuous additive functionals which will be
used in the sequel. Further information can be
found in  \cite{Blumental+07}; \cite{Dynkin63}, Ch. 6--8; \cite{Gikhman+04_II}, Ch.
II, \S 6.

Let $(\xi_t, \mathcal{M}_t, \P_x)$ be a homogeneous Markov process  with
a phase space $\R$ (see notations in \cite{Dynkin63}). Assume that $\xi_t, \ t\geq 0,$ has continuous trajectories and the infinite life-time.
Denote $\mathcal{N}_t=\sigma\{\xi_s:  0\leq s\leq t\}.$

\begin{Definition}\label{def_W_func}
A  random function $A_t, t\geq 0,$ adapted to the filtration $\{\mathcal{N}_t\}$   is called a  continuous additive functional of the process $(\xi_t)_{t\geq0}$ if it is
\begin{itemize}
\item  non-negative;
\item continuous in $t$;
\item homogeneous additive, i.e. for all $t\geq 0, \ s>0,$ $x\in\R,$
\begin{equation}\label{eq_additive}
A_{t+s}=A_s+\theta_sA_t \ \ \mathds{P}_x-\mbox{almost surely},
\end{equation}
where $\theta$ is the shift operator.
\end{itemize}
If additionally for each $t\geq 0,$
$$
\sup_{x\in\R}\mathds{E}_xA_t<\infty,
$$
then $A_t, \ t\geq 0,$ is called  a W-functional.
\end{Definition}

\begin{Remark}
It follows from Definition \ref{def_W_func} that a W-functional is non-decreasing as a function of  $t$, and for all $x\in\R$
$$
\mathds{P}_x\{A_0=0\}=1.
$$
\end{Remark}
\begin{Definition}
The function
$$
f_t(x)=\mathds{E}_x A_t, \ t\geq 0, \ x\in\R,
$$
is called the characteristic of a $W$-functional $A_t.$
\end{Definition}
\begin{Proposition}[See \cite{Dynkin63}, Theorem 6.3]\label{Proposition_uniquely_defined}
A W-functional is defined by its characteristic uniquely up to equivalence.
\end{Proposition}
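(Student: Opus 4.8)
The plan is to fix two W-functionals $A_t$ and $B_t$ of $(\xi_t)_{t\ge 0}$ with a common characteristic $f_t(x)=\E_xA_t=\E_xB_t$ and to prove that $C_t:=A_t-B_t$ vanishes, i.e.\ $\P_x\{C_t=0\ \text{for all }t\ge 0\}=1$ for every $x\in\R$; this is precisely the asserted uniqueness up to equivalence. The process $C_t$ is $\{\mathcal N_t\}$-adapted, continuous in $t$, and, being the difference of two non-decreasing continuous functions with $C_0=0$, has $\P_x$-a.s.\ finite variation on every compact interval. Moreover $\E_x|C_t|\le\E_xA_t+\E_xB_t=2f_t(x)<\infty$, and, combining the additivity relation $C_u-C_s=\theta_sC_{u-s}$ (valid $\P_x$-a.s.\ for $s\le u$) with the Markov property,
$$
\E_x\big[\,C_u-C_s\mid\mathcal N_s\,\big]=\E_{\xi_s}\big[C_{u-s}\big]=\E_{\xi_s}A_{u-s}-\E_{\xi_s}B_{u-s}=0,
$$
the last step because $A$ and $B$ have the same characteristic. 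Hence $(C_t,\mathcal N_t,\P_x)$ is a martingale for each $x$, and the whole statement reduces to the classical fact that a continuous martingale of finite variation, starting from $0$, is indistinguishable from $0$.

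To make this quantitative while avoiding any a priori integrability of $A_t^2$, I would localize. Put $\tau_N:=\inf\{s\ge 0:\ A_s+B_s\ge N\}$, which is an $\{\mathcal N_t\}$-stopping time because $A_t+B_t$ is adapted, continuous and non-decreasing, and observe that $\tau_N\uparrow\infty$ $\P_x$-a.s.\ (indeed $A_t+B_t$ has finite expectation $2f_t(x)$, hence is $\P_x$-a.s.\ finite for each $t$, while being non-decreasing in $t$). The stopped process $C^{\tau_N}_t:=C_{t\wedge\tau_N}$ is again a continuous $\{\mathcal N_t\}$-martingale (optional sampling at the bounded stopping times $\cdot\wedge\tau_N$), and the total variation of $s\mapsto C^{\tau_N}_s$ on any interval $[0,t]$ is at most $A_{t\wedge\tau_N}+B_{t\wedge\tau_N}\le N$.

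Now fix $t>0$ and a partition $0=t_0<t_1<\dots<t_n=t$ of mesh $\delta$, and write $X_k:=C^{\tau_N}_{t_{k+1}}-C^{\tau_N}_{t_k}$, so that $C^{\tau_N}_t=\sum_{k=0}^{n-1}X_k$ and each $|X_k|\le N$. For $k<l$ the variable $X_k$ is $\mathcal N_{t_l}$-measurable while $\E_x[\,X_l\mid\mathcal N_{t_l}\,]=0$ by the martingale property, so $\E_x[X_kX_l]=0$; consequently
$$
\E_x\big(C^{\tau_N}_t\big)^2=\sum_{k=0}^{n-1}\E_x X_k^2\le \E_x\Big[\Big(\max_{0\le k<n}|X_k|\Big)\sum_{k=0}^{n-1}|X_k|\Big]\le N\,\E_x\big[\omega_\delta\big],
$$
where $\omega_\delta:=\sup\{|C^{\tau_N}_u-C^{\tau_N}_v|:\,u,v\in[0,t],\ |u-v|\le\delta\}$. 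Since the left-hand side does not depend on $\delta$, and as $\delta\downarrow 0$ we have $\omega_\delta\downarrow 0$ $\P_x$-a.s.\ (uniform continuity of $C^{\tau_N}$ on $[0,t]$) with $0\le\omega_\delta\le 2\sup_{s\le t}|C^{\tau_N}_s|\le 2N$, dominated convergence forces $\E_x\big(C^{\tau_N}_t\big)^2=0$. Thus $C_{t\wedge\tau_N}=0$ $\P_x$-a.s.\ for every $t$; taking $t$ rational and invoking continuity gives $\P_x\{C_{t\wedge\tau_N}=0\ \forall t\}=1$, and letting $N\to\infty$ yields $\P_x\{C_t=0\ \forall t\}=1$ for every $x\in\R$, i.e.\ the equivalence of $A$ and $B$.

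The conceptual content is slight --- a continuous additive functional of finite variation with vanishing characteristic must itself vanish --- so the delicate points are all of a bookkeeping nature: justifying the additivity identity and the Markov relation $\E_x[\theta_sF\mid\mathcal N_s]=\E_{\xi_s}[F]$ at the needed level of generality (measurability of W-functionals with respect to $\{\mathcal N_t\}$, regularity of the filtration), checking that $C^{\tau_N}$ inherits the martingale property under stopping at bounded times, and verifying the total-variation bound and the modulus-of-continuity domination used in the dominated-convergence step. I expect this last point, together with the optional-sampling manipulation, to be where one must be most careful. (If one prefers not to localize, the same partition computation can be run directly on $C$ once one invokes Dynkin's a priori second-moment estimate $\E_xA_t^2\le 2\big(\sup_y f_t(y)\big)f_t(x)$ for W-functionals, \cite{Dynkin63}, Ch.~6; the localized argument has the merit of being self-contained.)
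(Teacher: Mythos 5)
Your argument is correct, and it is genuinely different from the source the paper relies on: the paper itself gives no proof of this proposition, it simply cites Dynkin (Theorem 6.3). Dynkin's route is a quantitative one: over a partition of $[0,t]$ he expands $\E_x(A_t-B_t)^2$, controls the cross terms through the Markov property, and bounds the result by the uniform distance of the two characteristics; uniqueness is then the special case of equal characteristics, and the same estimate is what powers the convergence statement quoted as Theorem \ref{Theorem_Convergence_characteristics}. You instead observe that $C_t=A_t-B_t$ is an $\{\mathcal{N}_t\}$-martingale under each $\P_x$ (additivity at the fixed pair $(s,u)$ plus the Markov property, conditioning through $\mathcal{M}_s\supseteq\mathcal{N}_s$, and integrability from $\E_x|C_t|\le 2f_t(x)$), is continuous and of finite variation because W-functionals are non-decreasing (the paper's Remark 1), and hence vanishes; your localization via $\tau_N=\inf\{s:A_s+B_s\ge N\}$ is legitimate since this hitting time of a continuous non-decreasing adapted process is a stopping time for the raw filtration, and the orthogonal-increments computation with the bound $\sum_k|X_k|\le N$ and $\max_k|X_k|\le\omega_\delta$ is the standard proof that a continuous finite-variation martingale started at $0$ is indistinguishable from $0$. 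The two approaches share the same kernel (orthogonality of increments via the Markov property), but yours is qualitative and self-contained, while Dynkin's yields the stability estimate that the paper actually needs elsewhere; note also that the non-localized variant you sketch at the end is available verbatim in the paper, since Proposition \ref{Prop_Gikhman_Skorokhod} with $m=2$ gives exactly the a priori bound $\E_x A_t^2\le 2\left(\sup_y f_t(y)\right)^2$ that makes the partition argument run without stopping times.
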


The following theorem states the connection between the convergence of
functionals and the convergence of their characteristics.
\begin{Theorem}[See \cite{Dynkin63}, Theorem 6.4]\label{Theorem_Convergence_characteristics} Let $A_{n,t}, \
n\geq 1,$ be W-functionals of the process $(\xi_t)_{t\geq0}$  and $f_{n,t}(x)=\mathds{E}_x A_{n,t}$ be
their characteristics. Suppose that for each $t>0$, a function
$f_t(x)$ satisfies the condition
\begin{equation}\label{eq_uni_conv_char}
\lim_{n\to\infty}\sup_{0\leq u\leq t}\sup_{x\in\R}|f_{n,u}(x)-f_u(x)|=0.
\end{equation}
Then $f_t(x)$ is the characteristic of a W-functional $A_t$. Moreover,
\begin{equation*}
A_t=\llim_{n\to\infty} A_{n,t},
\end{equation*}
where $\llim$ denotes the convergence in mean square (for any initial distribution of $\xi_0$).
\end{Theorem}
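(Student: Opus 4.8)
The plan is to reconstruct the classical argument of \cite{Dynkin63}, whose core is a second-moment estimate for W-functionals. Let $B_t$ be a W-functional with characteristic $g_t(x)=\E_x B_t$. Since $s\mapsto B_s$ is continuous and nondecreasing, $B_t^2=2\int_0^t(B_t-B_s)\,dB_s$; approximating this Stieltjes integral by sums tagged at the right endpoints, and combining the additivity $B_t-B_s=\theta_sB_{t-s}$ with the Markov property $\E_x[\theta_sB_{t-s}\mid\mathcal N_s]=g_{t-s}(\xi_s)$, one obtains
\begin{equation}\label{eq_secondmoment}
\E_x B_t^2=2\,\E_x\int_0^t g_{t-s}(\xi_s)\,dB_s\le 2\Big(\sup_{0\le u\le t}\sup_{y\in\R}g_u(y)\Big)\E_x B_t,
\end{equation}
and, $u\mapsto g_u$ being nondecreasing, the bracket equals $\sup_{y\in\R}g_t(y)$. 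Running the same manipulation for the continuous, signed, bounded-variation additive functional $A_{n,\cdot}-A_{m,\cdot}$ — expand $(A_{n,t}-A_{m,t})^2$, apply \eqref{eq_secondmoment} to $A_n$ and to $A_m$, and use the companion identity $\E_x\big(A_{n,t}A_{m,t}\big)=\E_x\int_0^t f_{n,t-s}(\xi_s)\,dA_{m,s}+\E_x\int_0^t f_{m,t-s}(\xi_s)\,dA_{n,s}$ for the cross term — yields the key formula
\begin{equation}\label{eq_diff}
\E_x\big(A_{n,t}-A_{m,t}\big)^2=2\,\E_x\int_0^t\big(f_{n,t-s}-f_{m,t-s}\big)(\xi_s)\,d\big(A_{n,s}-A_{m,s}\big).
\end{equation}

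First I would estimate the right-hand side of \eqref{eq_diff}. Set $\delta_{n,m}(t):=\sup_{0\le u\le t}\sup_{y\in\R}|f_{n,u}(y)-f_{m,u}(y)|$. The integrand in \eqref{eq_diff} is dominated by $\delta_{n,m}(t)$, and the total variation of $s\mapsto A_{n,s}-A_{m,s}$ on $[0,t]$ does not exceed $A_{n,t}+A_{m,t}$, so
\begin{equation*}
\E_x\big(A_{n,t}-A_{m,t}\big)^2\le 2\,\delta_{n,m}(t)\,\big(f_{n,t}(x)+f_{m,t}(x)\big)\le 2\,\delta_{n,m}(t)\Big(\sup_{y\in\R}f_{n,t}(y)+\sup_{y\in\R}f_{m,t}(y)\Big).
\end{equation*}
Since the $A_{n,t}$ are W-functionals, $\sup_{y}f_{n,t}(y)<\infty$, and by \eqref{eq_uni_conv_char} these suprema converge (to the finite number $\sup_{y}f_t(y)$, uniform convergence transferring to the supremum), hence are bounded; also $\delta_{n,m}(t)\to0$ as $n,m\to\infty$. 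Therefore $(A_{n,t})_n$ is Cauchy in $L^2(\P_\nu)$ for every $t$ and every initial law $\nu$, and I would define $A_t:=\llim_{n\to\infty}A_{n,t}$.

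Next I would verify that this $A_t$ is a W-functional with characteristic $f_t$. Non-negativity, adaptedness to $\{\mathcal N_t\}$, the equalities $\E_xA_t=\lim_nf_{n,t}(x)=f_t(x)$ and $\sup_{x\in\R}\E_xA_t=\sup_{x\in\R}f_t(x)<\infty$ follow from the $L^2$-convergence (a fortiori $L^1$-convergence); and the homogeneous additivity \eqref{eq_additive} carries over because $A_{n,t+s}=A_{n,s}+\theta_sA_{n,t}$ and, by the Markov property together with the convergence under arbitrary initial law, $\theta_sA_{n,t}\to\theta_sA_t$ in $L^2(\P_x)$. The only genuinely delicate point is to exhibit a modification of $A_t$ that is continuous in $t$; for this I would apply \eqref{eq_diff} to the increment over a subinterval $[u,v]\subset[0,t]$ (equivalently, condition on $\mathcal N_u$ and use $(A_{n,v}-A_{m,v})-(A_{n,u}-A_{m,u})=\theta_u(A_{n,v-u}-A_{m,v-u})$), obtaining a bound of order $\delta_{n,m}(t)$ on the second moments of these increments, uniform in the starting point, and then, along a rapidly converging subsequence $(n_k)$, combine a dyadic chaining estimate with Borel--Cantelli to conclude that $A_{n_k,\cdot}\to A_\cdot$ uniformly on compact time intervals almost surely, so the limit inherits the continuity of the approximants. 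The mean-square convergence asserted in the theorem is precisely what was established above.

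I expect the main obstacle to be the rigorous justification of the Stieltjes-and-Markov computations behind \eqref{eq_secondmoment} and \eqref{eq_diff}: the measure $dB_s$ is not $\mathcal N_s$-measurable, so the tags in the approximating sums must be placed so that the surviving increment factor is, conditionally on the present state, independent of the past, and the errors produced by the non-matching tag of the other factor must be shown to vanish as the mesh tends to zero. A secondary difficulty is upgrading mean-square convergence at each fixed $t$ to a single, jointly measurable, time-continuous limit — which is exactly where the uniformity built into \eqref{eq_uni_conv_char}, as opposed to mere pointwise convergence of characteristics, is used.
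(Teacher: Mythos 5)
The paper itself gives no proof of this statement (it is quoted verbatim from Dynkin), so the comparison is with the classical argument you are reconstructing, and your $L^2$ core is indeed that argument and is correct: the tagged-sum derivation of $\E_x B_t^2=2\E_x\int_0^t g_{t-s}(\xi_s)\,dB_s$, the polarized cross-term identity, the resulting bound $\E_x(A_{n,t}-A_{m,t})^2\le 2\,\delta_{n,m}(t)\bigl(f_{n,t}(x)+f_{m,t}(x)\bigr)$, the Cauchy property uniform in $x$, and the identification of the limit's characteristic and additivity are all in order (the a priori finiteness of second moments needed to pass to the limit in the tagged sums is exactly the moment bound the paper quotes as Proposition \ref{Prop_Gikhman_Skorokhod}; also your tag placement resolves the measurability issue correctly, even though the difficulty is the future-dependent factor $B_t-B_{s_{i+1}}$ rather than the measure $dB_s$ itself, $B$ being adapted).

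The genuine gap is the last step, continuity of the limit via ``dyadic chaining''. Your increment estimate $\E_x\bigl[((A_{n,v}-A_{m,v})-(A_{n,u}-A_{m,u}))^2\bigr]\le C\,\delta_{n,m}(t)$ is uniform over subintervals but does not decay as $v-u\to0$, and Kolmogorov-type chaining requires decay in $|v-u|$: at dyadic level $j$ the union bound costs $2^{j}\delta_{n,m}/\veps_j^2$, and the sum over all levels diverges for each fixed pair $(n,m)$. Patching this with monotonicity (bounding $\sup_{t\le T}|A_{n,t}-A_{m,t}|$ by a grid maximum plus the grid oscillation of one of the functionals) forces you to control $\sup_y f_{n,h}(y)$ uniformly in $n$ as $h\downarrow0$, i.e.\ to assume condition (\ref{Cond_A}), which the theorem does not assume. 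The step that actually works in this generality is a maximal inequality for the signed additive functional $D=A_{n,\cdot}-A_{m,\cdot}$: applying the (strong) Markov property at the first time $|D|$ reaches $\veps$ (or at the first point of a finite grid where $|D|\ge\veps$, then refining the grid and using continuity of $D$), additivity gives $D_T=D_\tau+\theta_\tau D_{T-\tau}$, whence for $2\delta_{n,m}(T)<\veps$,
\begin{equation*}
\P_x\Bigl\{\sup_{s\le T}|A_{n,s}-A_{m,s}|\ge\veps\Bigr\}\le \frac{\sup_{y\in\R}\E_y\bigl(A_{n,T}-A_{m,T}\bigr)^2}{\veps\bigl(\veps-2\delta_{n,m}(T)\bigr)},
\end{equation*}
and Borel--Cantelli along a rapidly converging subsequence then yields a.s.\ uniform convergence on $[0,T]$, hence a continuous (and still additive, adapted) modification of the limit. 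Note that Proposition \ref{Proposition_uniform_convergence} (Dynkin's Lemma 6.1$'$) cannot substitute for this, since it presupposes that the limiting functional is already continuous.
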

\begin{Proposition}[See \cite{Dynkin63}, Lemma 6.1$^\prime$]\label{Proposition_uniform_convergence} If for any $t\geq 0$ the
sequence of non-negative additive functionals
$\left\{A_{n,t}: n\geq 1\right\}$ of the Markov process
$(\xi_t)_{t\geq 0}$ converges in probability to a continuous
functional $A_t$, then the convergence in probability is
uniform, i.e.
$$
\forall \ T>0 \sup_{t\in[0,T]}|A_{n,t}-A_t|\to 0, \
n\to\infty, \ \mbox{in probability}.
$$
\end{Proposition}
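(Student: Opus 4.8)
The final statement to prove is Proposition~\ref{Proposition_uniform_convergence}: if non-negative additive functionals $A_{n,t}$ of a Markov process converge in probability to a \emph{continuous} functional $A_t$ for each fixed $t$, then the convergence is automatically uniform on compact time intervals. Since the paper attributes this to Dynkin, the expected proof is a Dini-type argument exploiting monotonicity together with homogeneous additivity. The plan is as follows.

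First I would note that each $A_{n,t}$, being a non-negative additive functional (in the sense of Definition~\ref{def_W_func}, without necessarily the integrability giving the W-functional property), is non-decreasing in $t$: for $s,t \ge 0$ we have $A_{n,t+s} = A_{n,s} + \theta_s A_{n,t} \ge A_{n,s}$ since $A_{n,t}\ge 0$. Likewise the limit $A_t$ is non-decreasing. So we are in the setting of monotone functions of $t$ converging pointwise (in probability) to a continuous monotone limit, and the classical heuristic is the Pólya/Dini principle: pointwise convergence of monotone functions to a continuous limit upgrades to uniform convergence on compacts. The work is to make this quantitative in probability rather than pathwise, since we do not have almost sure convergence for free.

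The key steps I would carry out: (i) Fix $T>0$ and $\veps>0$. Choose a finite partition $0 = t_0 < t_1 < \dots < t_N = T$ of $[0,T]$ fine enough that, by continuity of $A$, the oscillation $A_{t_{k+1}} - A_{t_k}$ is small — but since $A$ is itself random, I must phrase this as: there is $\delta>0$ and an event of probability $\ge 1-\veps$ on which $A$ is uniformly continuous on $[0,T]$ with modulus controlled by $\delta$ (uniform continuity of a continuous function on a compact holds pathwise). Then pick the partition with mesh $<\delta$ so that $A_{t_{k+1}} - A_{t_k} < \veps$ on that event. (ii) For $t \in [t_k, t_{k+1}]$, monotonicity of both $A_{n,\cdot}$ and $A_\cdot$ gives the sandwich
$$
A_{n,t_k} - A_{t_{k+1}} \le A_{n,t} - A_t \le A_{n,t_{k+1}} - A_{t_k},
$$
so that $|A_{n,t} - A_t| \le \max_{0\le k\le N} |A_{n,t_k} - A_{t_k}| + \max_{0\le k\le N-1}(A_{t_{k+1}} - A_{t_k})$. (iii) The second maximum is $<\veps$ off a set of probability $\le \veps$ by the choice of partition; the first maximum is a maximum over the \emph{finitely many} fixed times $t_0,\dots,t_N$, and at each of these $A_{n,t_k}\to A_{t_k}$ in probability by hypothesis, so the finite maximum tends to $0$ in probability. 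Combining, $\P\{\sup_{t\in[0,T]}|A_{n,t}-A_t| > 2\veps\} \to 0$, which is the claim.

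The main obstacle — and the only genuinely delicate point — is step (i): ensuring that the partition can be chosen \emph{uniformly in the randomness} so that the oscillation bound $A_{t_{k+1}} - A_{t_k} < \veps$ holds with high probability simultaneously for all $k$. Pathwise each trajectory $t \mapsto A_t(\omega)$ is uniformly continuous on $[0,T]$, but its modulus of continuity depends on $\omega$; one handles this by first picking $M$ large so that $\P\{A_T > M\}<\veps$ (using that $A$ is finite), then using that the family of continuous non-decreasing paths bounded by $M$ has, off a small exceptional set, a common control on the number of jumps of size $\ge \veps$, namely at most $M/\veps$ of them — so a sufficiently fine deterministic partition works off an event of probability $\ge 1 - 2\veps$. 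Once this uniform-in-$\omega$ partition is secured, the rest is the elementary monotone sandwich plus a finite union bound, and no further input about the Markov structure is needed beyond non-negativity and additivity, which were used only to get monotonicity.
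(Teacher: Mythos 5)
The paper gives no proof of this proposition at all---it is quoted verbatim from Dynkin (Lemma 6.1$^\prime$ of \cite{Dynkin63})---so there is no in-paper argument to compare against; your Dini/P\'olya-type argument is in fact the classical route, and its core is sound: non-negativity plus additivity give monotonicity of $t\mapsto A_{n,t}$, the sandwich over a finite partition bounds $\sup_{t\le T}|A_{n,t}-A_t|$ by $\max_k|A_{n,t_k}-A_{t_k}|$ plus the maximal oscillation of $A$ over the partition intervals, and the first term is handled by convergence in probability at finitely many times. One caveat: your closing paragraph's justification for choosing the partition---counting ``jumps of size $\ge\veps$'' and bounding their number by $M/\veps$---does not work as stated. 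A continuous non-decreasing path bounded by $M$ has no jumps, and knowing that at most $M/\veps$ partition intervals carry an increment $\ge\veps$ does not make the \emph{maximal} increment small, which is what the sandwich bound needs. Fortunately this detour is unnecessary: your step (i) already contains the correct mechanism. For each $\omega$ the path $t\mapsto A_t(\omega)$ is uniformly continuous on $[0,T]$, so the events
\begin{equation*}
\Omega_\delta=\Bigl\{\sup_{\substack{s,t\in[0,T]\\ |s-t|\le\delta}}|A_t-A_s|<\veps\Bigr\}
\end{equation*}
increase to an event of full probability as $\delta\downarrow0$; choose $\delta$ with $\P(\Omega_\delta)\ge1-\veps$ and take any deterministic partition of mesh less than $\delta$. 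With that replacement the proof is complete and matches the standard argument behind Dynkin's lemma.
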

Let $h$ be a non-negative bounded measurable function on $\R$, let the process $(\xi_t)_{t\geq0}$  has a transition probability density $p_t(x,y)$. Then
$$
A_t:=\int_0^t h(\xi_s)ds
$$
is a $W$-functional of the process $(\xi_t)_{t\geq0}$ and its characteristic is equal to
$$
f_t(x)=\int_{\R}\left(\int_0^tp_s(x,y)ds\right)h(y)dy=\int_{\R}k_t(x,y)h(y)dy,
$$
where
$$
k_t(x,y)=\int_0^tp_s(x,y)ds.
$$

Let a measure $\nu$ be such that $\int_{\R}k_t(x,y)\nu(dy)$ is a function continuous in $(t,x).$ If we can choose a sequence of non-negative bounded continuous functions $\{h_n: n\geq1\}$ such that for each $T>0,$
$$
\lim_{n\to\infty}\sup_{t\in[0,T]}\sup_{x\in\R}\left|\int_{\R}k_t(x,y)h_n(y)dy-\int_{\R}k_t(x,y)\nu(dy)\right|=0,
$$
then by Theorem \ref{Theorem_Convergence_characteristics} there exists a W-functional corresponding to the measure $\nu$ with its characteristic being equal to $\int_{\R}k_t(x,y)\nu(dy).$ Formally we will denote this functional by $\int_0^t\frac{d\nu}{dy}(\xi_s)ds.$

A sufficient condition for the existence of a W-functional corresponding to a given measure is stated in the following theorem.
\begin{Theorem}[See \cite{Dynkin63}, Theorem 6.6]\label{Theorem_sufficient_condition}  Let the condition
\begin{equation}\label{Cond_A}
\lim_{t\downarrow 0}\sup_{x\in\R}f_t(x)=\lim_{t\downarrow 0}\sup_{x\in\R}\int_{\R}k_t(x,y)\nu(dy)=0
\end{equation}
hold. Then $f_t(x)$
is the characteristic of a W-functional
$A_t^{\nu}$. Moreover,
$$
A_t^{\nu}=\llim_{h\to 0}\int_0^t \frac{f_h(\xi_u)}{h}du,
$$
and the sequence of characteristics of functionals $\int_0^t \frac{f_h(\xi_u)}{h}du$ converges to $f_t(x)$ in sense of the relation \eqref{eq_uni_conv_char}.
\end{Theorem}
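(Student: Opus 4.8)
The plan is to realize $f_t$ as the uniform limit of the characteristics of the elementary W-functionals
$$
A_{h,t}:=\int_0^t\frac{f_h(\xi_u)}{h}\,du,\qquad h>0,
$$
and then to invoke Theorem \ref{Theorem_Convergence_characteristics}. The starting point is the Chapman--Kolmogorov identity for $k_t(x,y)=\int_0^tp_s(x,y)\,ds$: splitting the time integral at $t$ and using $p_{u+s}(x,y)=\int_{\R}p_u(x,z)p_s(z,y)\,dz$ yields $k_{t+s}(x,y)=k_t(x,y)+\int_{\R}p_t(x,z)k_s(z,y)\,dz$, hence
\begin{equation*}
f_{t+s}(x)=f_t(x)+\E_x f_s(\xi_t),\qquad f_0\equiv 0,
\end{equation*}
and, since $p_s\geq 0$, the function $t\mapsto f_t(x)$ is nondecreasing for every $x$. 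Put $\eta(h):=\sup_{x\in\R}f_h(x)$; then \eqref{Cond_A} says precisely that $\eta(h)\to 0$ as $h\downarrow 0$. Iterating the displayed identity also gives $\sup_{x}f_t(x)\leq\lceil t/h_0\rceil\,\eta(h_0)<\infty$ for all $t$, once $h_0$ is fixed with $\eta(h_0)<\infty$; in particular $f_h$ is bounded and measurable for all small $h$, so $f_h/h$ is an admissible integrand and $A_{h,t}$ is a W-functional whose characteristic, by the formula for the characteristic of $\int_0^t g(\xi_s)\,ds$ recalled above, equals $f^{(h)}_t(x)=\E_x A_{h,t}=\frac1h\int_0^t\E_x f_h(\xi_u)\,du$.

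Next I would compute $f^{(h)}_t$ in closed form. By the additivity identity, $\E_x f_h(\xi_u)=f_{u+h}(x)-f_u(x)$, so after the substitution $v=u+h$,
\begin{equation*}
f^{(h)}_t(x)=\frac1h\int_0^t\bigl(f_{u+h}(x)-f_u(x)\bigr)\,du=\frac1h\int_t^{t+h}f_v(x)\,dv-\frac1h\int_0^h f_v(x)\,dv.
\end{equation*}
Monotonicity of $v\mapsto f_v(x)$ gives $f_t(x)\leq\frac1h\int_t^{t+h}f_v(x)\,dv\leq f_{t+h}(x)$ and $0\leq\frac1h\int_0^h f_v(x)\,dv\leq f_h(x)\leq\eta(h)$, while $f_{t+h}(x)-f_t(x)=\E_x f_h(\xi_t)\leq\eta(h)$. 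Combining these three bounds,
\begin{equation*}
\sup_{0\leq u\leq t}\sup_{x\in\R}\bigl|f^{(h)}_u(x)-f_u(x)\bigr|\leq\eta(h)\longrightarrow 0,\qquad h\downarrow 0,
\end{equation*}
which is exactly the uniform convergence of characteristics required in \eqref{eq_uni_conv_char}.

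With this in hand, Theorem \ref{Theorem_Convergence_characteristics} applied along any sequence $h_n\downarrow 0$ shows that $f_t$ is the characteristic of a W-functional $A_t^{\nu}$ and that $A_t^{\nu}=\llim_n A_{h_n,t}$. Because the bound above is uniform in the parameter $h$ itself, a routine subsequence argument, together with the uniqueness of the W-functional with a prescribed characteristic (Proposition \ref{Proposition_uniquely_defined}), shows that the mean-square limit does not depend on the chosen sequence, so in fact $A_t^{\nu}=\llim_{h\downarrow 0}A_{h,t}$, as claimed. The point that requires genuine care — and which I regard as the heart of the argument — is the bookkeeping that telescopes $f^{(h)}_t$ into the difference of the two averages above and the verification that the resulting error bound is uniform over both $x$ and $t$; the remaining ingredients (measurability and boundedness of $f_h$ for small $h$, the Fubini steps behind the formula for $f^{(h)}_t$) are routine once the Chapman--Kolmogorov identity for $k_t$ and the monotonicity of $t\mapsto f_t(x)$ have been established. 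Note that hypothesis \eqref{Cond_A} enters only through $\eta(h)\to 0$, so no regularity of $\nu$ beyond the Kato-type bound is used.
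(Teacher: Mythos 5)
Your argument is correct: establishing the identity $f_{t+s}(x)=f_t(x)+\E_x f_s(\xi_t)$, telescoping the characteristic of $\int_0^t f_h(\xi_u)h^{-1}\,du$ into the two boundary averages, bounding the error uniformly by $\eta(h)=\sup_x f_h(x)$, and then invoking Theorem \ref{Theorem_Convergence_characteristics} together with uniqueness (Proposition \ref{Proposition_uniquely_defined}) to pass from sequences $h_n\downarrow0$ to $h\downarrow0$ is exactly the standard proof of this result. The paper itself gives no proof but cites \cite{Dynkin63}, Theorem 6.6, and your reconstruction follows essentially that same route, so there is nothing to correct.
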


Let us return to the SDE (\ref{eq_main}). Let $(\varphi_t)_{t\geq0}$ be a solution of equation (\ref{eq_main}) with bounded measurable $a$.  The transition probability density $p_t^{\varphi}(y,z)$ of the process $(\varphi_t)_{t\geq0}$ satisfies the  Gaussian estimates (see \cite{Aronson67})
\begin{equation}\label{eq_gaussian_estimates}
\frac{K_1}{t^{d/2}}\exp\left\{-k_1\frac{\|y-z\|^2}{t}\right\}\leq p_t^{\varphi}(y,z)\leq \frac{K_2}{t^{d/2}}\exp\left\{-k_2\frac{\|y-z\|^2}{t}\right\}
\end{equation}
valid in every domain of the form $t\in[0,T], y\in\R, z\in\R,$ where $T>0$, $K_1, k_1, K_2, k_2$ are positive constants that depend only on $d, T,$ and $\|a\|_{\infty}.$

Denote by $k^w_t(x,y)$ the kernel $k_t(x,y)$ built on the transition density of the Wiener process, i.e.
\begin{equation}\label{eq_character_1}
k^w_t(x,y)=\int_0^t\frac{1}{(2\pi s)^{d/2}}\exp\left\{-\frac{\|y-x\|^2}{2s}\right\}ds.
\end{equation}
It is easily to see (\cite{Dynkin63}, Ch. 8, \S 1) that for all $x\in\R, \ y\in\R, \ x\neq y,$ $k_t^w(x,y)=\widetilde{k}_t(\|x-y\|),$ where
\begin{equation}\label{eq_character_2}
\widetilde {k}_t(r)=\frac{1}{2\pi^{d/2}}r^{2-d}\int_{r^2/2t}^{\infty}s^{d/2-2}e^{-s}ds, \ r>0.
\end{equation}
Therefore, the kernel $k^w_t(x,y)$ has a singularity if $x=y$ (for $d>1$) and the integral
\begin{equation*}\label{eq_character_finite}
f_t(x)=\int_{\R}k_t^w(x,y)\nu(dy)
\end{equation*}
is not well defined for all measures.

\begin{Definition}[see \cite{Kuwae+07}] A measure $\nu$ is a measure of Kato's class if
\begin{equation} \label{Cond_A_prime}
\lim_{t\downarrow 0}\sup_{x\in\R}\int_{\R}k_t^{w}(x,y)\nu(dy)=0.
\end{equation}
\end{Definition}

It follows from (\ref{eq_gaussian_estimates}) that a measure $\nu$ satisfies the condition (\ref{Cond_A}) if and only if it belongs to Kato's class.

\begin{Remark} \label{Cond_A_equiv}
A measure $\nu$ satisfies the condition (\ref{Cond_A_prime}) if and only if
\begin{eqnarray*}\label{eq_Kato_class}
\sup_{x\in\mathds{R}}\int_{|x-y|\leq1}\nu(dy)<\infty,& \mbox{when} \ d=1;\\
\lim_{\varepsilon\downarrow0}\sup_{x\in\mathds{R}^2}\int_{|x-y|\leq\varepsilon}\ln\frac{1}{|x-y|}\nu(dy)=0,& \mbox{when} \ d=2;\\
\lim_{\varepsilon\downarrow0}\sup_{x\in\mathds{R}^d}\int_{|x-y|\leq\varepsilon}|x-y|^{2-d}\nu(dy)=0,& \mbox{when} \ d\geq3.
\end{eqnarray*}
 The proof is a slight modification of that for the case of $\nu(dx)=f(x)dx$ given in \cite{Aizenman+82}, Theorem 4.5 (see  also \cite{Sznitman98}, Exercise 1 on p. 12). Here $f$ is a non-negative Borel measurable function. We use the representation (\ref{eq_character_2}) in the proof.
\end{Remark}

\begin{Example}\label{example_local_time} Let $d=1$. For each $y\in\R$, the measure $\nu=\delta_y$ belongs to Kato's class and corresponds to the W-functional
$$
L_t(y)=\lim_{\varepsilon\downarrow0}\frac{1}{2\varepsilon}\int_0^t \mathds{1}_{[y-\varepsilon,y+\varepsilon]}\left(w_s\right)ds,
$$
which is called the local time of a Wiener process at the point $y$.
  Assume that $\nu$ is a measure of Kato's class. This means now that $\sup_{x\in\mathds{R}}\nu([x,x+1])<\infty.$ Then (see \cite{Revuz+99}, Ch. X, \S 2)  the corresponding W-functional can be represented in the form
$$
A_t^{\nu}=\int_{\mathds{R}}L_t(y)\nu(dy).
$$
\end{Example}
\begin{Remark}
If $d\geq 2$, then $\delta_y$ does not belong to Kato's class. This is in consistency with the well-known fact that the local time at a point for a multidimensional Wiener process does not exist.
\end{Remark}

\begin{Example}\label{example_W_bound_function}
If $\nu(dy)=f(y)dy,$ where $f$ is a non-negative bounded function, then $\nu$
is a measure of Kato's class and $A^\nu_t=\int_0^tf(\xi_s)ds$.
\end{Example}
\begin{Example}\label{example_W_manifold}
Let $S\subset \R$ be a compact $(d-1)$-dimensional $C^1$-manifold. Denote by $\sigma_S$	 the surface measure.
Then for any non-negative bounded function $f$, the measure $\nu(dy)=f(y)\sigma_S(dy)$ belongs to Kato's class.
\end{Example}
\begin{Example}\label{example_W_Hausd_meas} Let $d\geq 2.$ Assume that a measure $\nu$ is such that
$$
\exists k, \gamma>0\ \forall x\in\R\ \forall \rho\in (0,1]:\ \ \ \ \nu(B(x,\rho))_\leq k\rho^{d-2+\gamma}.
$$
Then (c.f. \cite{Bass+03}, \S 2)
$$
\exists c=c(d,\gamma)\ \forall x\in \R\ \forall \rho\in(0,1]:\ \ \ \
 \int_{B(x,\rho)}|x-y|^{2-d}\nu(dy)\leq ck\rho^\gamma.
$$
This inequality together with Remark \ref{Cond_A_equiv}
yields
that $\nu$ is a measure of Kato's class. In particular, the
Hausdorff measure on the Sierpinski carpet in $\mathds{R}^2$ is such a measure  (see \cite{Bass+03}, Example 2.2).
\end{Example}

We will need the  uniform estimates on the moments of a W-functional.
\begin{Proposition}[\cite{Gikhman+04_II}, Ch. II, \S 6, Lemma 3]\label{Prop_Gikhman_Skorokhod} For all $m\geq 1, t>0,$
\begin{equation}\label{ineq_Gikhman_Skorokhod}
\sup_{x\in\R}\mathds{E}_x (A_t)^m\leq m!\left(\sup_{x\in\R}f_t(x)\right)^m.
\end{equation}
\end{Proposition}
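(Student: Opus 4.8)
The plan is to argue by induction on $m$, exploiting the homogeneous additivity \eqref{eq_additive} together with the Markov property. Put $\kappa(t):=\sup_{x\in\R}f_t(x)$; it is finite because $A_t$ is a W-functional, and it is non-decreasing in $t$ because $A_t$ is non-decreasing. The case $m=1$ is just the definition of the characteristic: $\mathds{E}_x A_t=f_t(x)\le\kappa(t)$. Assume the bound for $m-1$, i.e. $g_u(y):=\mathds{E}_y(A_u)^{m-1}\le (m-1)!\,\kappa(u)^{m-1}$ for all $u>0$, so that in particular $g_u(y)\le(m-1)!\,\kappa(t)^{m-1}$ for every $u\le t$.

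The deterministic input is the pathwise identity $(A_t)^m=m\int_0^t(A_t-A_s)^{m-1}\,dA_s$, obtained by the change of variable $u=A_s$, which is legitimate since $s\mapsto A_s$ is continuous, non-decreasing and $A_0=0$. Fix a sequence of partitions $\pi_n=\{0=s_0^n<\dots<s_{k_n}^n=t\}$ with mesh tending to $0$, and set
$$
S_n:=m\sum_{k}(A_t-A_{s_{k+1}^n})^{m-1}(A_{s_{k+1}^n}-A_{s_k^n}).
$$
Since the integrand $s\mapsto(A_t-A_s)^{m-1}$ is continuous and non-increasing while $dA_s\ge0$, each $S_n$ is non-negative and in fact $S_n\le(A_t)^m$, and $S_n\to(A_t)^m$ pathwise as $n\to\infty$ (uniform continuity of the integrand on $[0,t]$ and pathwise finiteness of $A_t$).

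Next I would take $\mathds{E}_x$ of $S_n$ term by term and condition on $\mathcal{M}_{s_{k+1}^n}$. Since $A$ is $\{\mathcal{N}_t\}$-adapted, hence $\{\mathcal{M}_t\}$-adapted, the factor $A_{s_{k+1}^n}-A_{s_k^n}$ is $\mathcal{M}_{s_{k+1}^n}$-measurable; and by \eqref{eq_additive} one has $A_t-A_{s_{k+1}^n}=\theta_{s_{k+1}^n}A_{t-s_{k+1}^n}$, so the Markov property gives $\mathds{E}_x\bigl[(A_t-A_{s_{k+1}^n})^{m-1}\mid\mathcal{M}_{s_{k+1}^n}\bigr]=g_{t-s_{k+1}^n}(\xi_{s_{k+1}^n})\le(m-1)!\,\kappa(t)^{m-1}$. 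Hence, by the tower property,
$$
\mathds{E}_x S_n=m\sum_{k}\mathds{E}_x\bigl[(A_{s_{k+1}^n}-A_{s_k^n})\,g_{t-s_{k+1}^n}(\xi_{s_{k+1}^n})\bigr]\le m!\,\kappa(t)^{m-1}\sum_{k}\mathds{E}_x(A_{s_{k+1}^n}-A_{s_k^n})=m!\,\kappa(t)^{m-1}f_t(x).
$$
Since $S_n\ge0$ and $S_n\to(A_t)^m$, Fatou's lemma yields $\mathds{E}_x(A_t)^m\le\liminf_n\mathds{E}_x S_n\le m!\,\kappa(t)^{m-1}f_t(x)\le m!\,\kappa(t)^m$; taking the supremum over $x$ closes the induction. (This also proves a posteriori that $(A_t)^m$ is integrable, so no prior integrability is needed.)

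The only delicate point is the conditioning step: one must use that $A_t-A_{s_{k+1}^n}$ genuinely equals $\theta_{s_{k+1}^n}$ applied to the functional $A_{t-s_{k+1}^n}$ of the shifted trajectory — which is exactly the homogeneous additivity in Definition \ref{def_W_func} — and that $A$ is adapted, so that the Markov property applies in the form used above. Everything else is routine.
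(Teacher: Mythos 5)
Your proof is correct and coincides with the standard argument for this classical bound: the paper itself gives no proof, citing Gikhman--Skorokhod (Ch.~II, \S 6, Lemma~3), and the cited proof is exactly your induction based on the identity $(A_t)^m=m\int_0^t(A_t-A_s)^{m-1}\,dA_s$, the additivity $A_t-A_s=\theta_s A_{t-s}$, and the Markov property. The approximation by Riemann--Stieltjes sums plus Fatou is a legitimate (if slightly more pedestrian) way to justify the conditioning step, and yields the same (in fact slightly sharper) bound $\mathds{E}_x(A_t)^m\leq m!\,\bigl(\sup_y f_t(y)\bigr)^{m-1}f_t(x)$.
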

Making use of this proposition one can easily obtain
the following modification of Khas'minskii's Lemma (see \cite{Khasminskii59} or \cite{Sznitman98}, Ch.1 Lemma 2.1).
\begin{Lemma}\label{Lemma_exponent_moment} Let the W-function $f_t$ satisfies the condition (\ref{Cond_A}). Let $A_t$ be the corresponding W-functional. Then for all $p>0,$ $t\geq0$, there exists a constant $C$ depending on $p, t,$ and $\|f_t\|_{\infty}$ such that for all $x\in\R,$
\begin{equation}\label{eq_exp_Moment_W-functional1}
\sup_{x\in\R}\mathds{E}_x \exp\left\{p A_t\right\}\leq C.
\end{equation}
\end{Lemma}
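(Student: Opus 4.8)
The plan is to reproduce Khas'minskii's classical argument, combining the polynomial moment bound (\ref{ineq_Gikhman_Skorokhod}) on short time intervals with the additivity relation (\ref{eq_additive}) and the homogeneous Markov property to pass to arbitrary $t$. Note first that $A_t$ is a W-functional, so its characteristic $f_t$ is finite, and $f_t(x)$ is non-negative and non-decreasing in $t$ (the Remark following Definition \ref{def_W_func}); this is what makes the series below meaningful.

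\emph{Step 1 (short intervals).} Fix $p>0$. Using (\ref{Cond_A}) I would first choose $\delta>0$ so small that $\sup_{x\in\R}f_\delta(x)\le\frac1{2p}$; by monotonicity in $t$ this gives $\sup_{x\in\R}f_s(x)\le\frac1{2p}$ for every $s\in[0,\delta]$. Then, expanding the exponential, interchanging sum and expectation by monotone convergence, and applying Proposition \ref{Prop_Gikhman_Skorokhod},
$$
\E_x\exp\{pA_s\}=\sum_{m\ge0}\frac{p^m}{m!}\E_x(A_s)^m\le\sum_{m\ge0}p^m\Big(\sup_{y\in\R}f_s(y)\Big)^m\le\sum_{m\ge0}2^{-m}=2
$$
for every $x\in\R$ and $s\in[0,\delta]$. (In the subcritical regime $p\|f_t\|_\infty<1$ the same computation already gives $\sup_x\E_x\exp\{pA_t\}\le(1-p\|f_t\|_\infty)^{-1}$.)

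\emph{Step 2 (splitting and the Markov property).} For general $t>0$ I would set $n=\lceil t/\delta\rceil$ and $s=t/n\le\delta$. Iterating (\ref{eq_additive}) gives $\P_x$-a.s. $A_t=A_{ns}=\sum_{k=0}^{n-1}\theta_{ks}A_s$, so $\exp\{pA_t\}=\prod_{k=0}^{n-1}\exp\{p\,\theta_{ks}A_s\}$. The product of the first $n-1$ factors is $\mathcal N_{(n-1)s}$-measurable, hence $\mathcal M_{(n-1)s}$-measurable, whereas $\theta_{(n-1)s}A_s$ is a functional of the path after time $(n-1)s$; so by time-homogeneity $\E_x[\exp\{p\,\theta_{(n-1)s}A_s\}\mid\mathcal M_{(n-1)s}]=\E_{\xi_{(n-1)s}}\exp\{pA_s\}\le2$ by Step 1. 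Conditioning on $\mathcal M_{(n-1)s}$, extracting this bounded conditional expectation, and iterating $n$ times yields
$$
\sup_{x\in\R}\E_x\exp\{pA_t\}\le\Big(\sup_{y\in\R}\E_y\exp\{pA_s\}\Big)^n\le2^{n}=2^{\lceil t/\delta\rceil}=:C,
$$
which is (\ref{eq_exp_Moment_W-functional1}); the constant $C$ depends on $p$, $t$ and, through the choice of $\delta$, on the characteristic $f_t$ (for $p\|f_t\|_\infty<1$ one may take $C=(1-p\|f_t\|_\infty)^{-1}$).

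\emph{Main obstacle.} The estimates themselves are elementary; the only delicate point is the rigorous implementation of the ``peeling'' in Step 2 --- realizing $\theta_{ks}A_s$ as a random variable measurable with respect to the $\sigma$-field generated by the process on $[ks,(k+1)s]$, so that (\ref{eq_additive}) and the time-homogeneity of $(\xi_t)$ genuinely allow one to strip off each factor $\exp\{p\,\theta_{ks}A_s\}$ and replace it by its uniform upper bound $2$.
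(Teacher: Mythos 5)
Your proof is correct and takes essentially the same route the paper relies on: it is exactly the Khas'minskii-type argument (cf.\ Sznitman, Ch.~1, Lemma~2.1) that the paper invokes without writing out, namely the subcritical bound on a short interval via Proposition \ref{Prop_Gikhman_Skorokhod} together with condition (\ref{Cond_A}), followed by the additivity--Markov ``peeling'' over $\lceil t/\delta\rceil$ subintervals. The only minor point worth noting is that your constant $2^{\lceil t/\delta\rceil}$ depends on the rate of decay in (\ref{Cond_A}) through the choice of $\delta$ (not on $\|f_t\|_\infty$ alone, except in the subcritical case $p\|f_t\|_\infty<1$), which is indeed the true dependence behind the lemma's statement.
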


By the definition of a W-functional, $A_t^{\nu}$ is measurable w.r.t. the $\sigma$-algebra
generated by the Markov process. Since we have assumed that all the processes are continuous and have the infinite life-times, we may assume that $A_t^{\nu}=A_t^{\nu}(\cdot)$ is a measurable function defined on $C([0,\infty),\R)$ that depends only on behavior of functions on $[0,t]$ (if there  is no misunderstanding, sometimes we will consider $A_t^{\nu}$ as a function on $C([0,t],\R)$).

Let $(\vp_t(x))_{t\geq0}$ be a solution of \eqref{eq_main} defined on a probability space $(\Omega,\mathcal{F},\mathcal{F}_t,\P)$.
By $\mathds{P}_x$ denote the distribution of the process $(\varphi_t(x))_{t\geq0}$.
In Dynkin's notation \cite{Dynkin63} $((\vp_t(x))_{t\geq 0},\mathcal{F}_t,\P)$ is called a Markov family of random functions (the measures $\P_x$ are measures on the space of continuous functions, the  measure $\P$ is a probability on  $(\Omega,\mathcal{F})$).
The composition $A_t^{\nu}(\varphi_.(x)), t\geq 0,$ is an additive functional of $(\vp_t(x))_{t\geq 0}$ corresponding to  the measure $\nu.$ Note that $A_t^{\nu}(\varphi_.(x))$ is defined on $(\Omega,\mathcal{F},\mathcal{F}_t,\P)$
for any $x\in\R$.



If the measure $\nu$ belongs to Kato's class, then the corresponding  additive functionals of $(\varphi_t)_{t\geq0}$
and the Wiener process   are well defined.
Denote the corresponding measurable mappings by $A_t^{\nu,\varphi}$ and $A_t^{\nu,w}$.
By the Girsanov theorem, for each $x\in\R$,
the distributions  of  the processes
$(\varphi_t(x))_{t\geq0}$  and $(x+w_t)_{t\geq0}$ are equivalent. The question naturally arises whether the mappings $A_t^{\nu,\varphi}$ and $A_t^{\nu,w}$ are the same. The answer is positive and it is formulated in the next Lemma.
\begin{Lemma}\label{Lemma_equiv_w_functionals}
Let $\nu$ be a measure of Kato's class. Then for any $x\in\R$,
$$
A_t^{\nu,w}(\varphi_.(x))=A_t^{\nu,\varphi}(\varphi_.(x)) \ \ \P-\mbox{almost surely.}
$$
\end{Lemma}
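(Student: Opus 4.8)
\noindent\emph{Sketch of proof.} The idea is to realize both functionals $A^{\nu,w}_t$ and $A^{\nu,\varphi}_t$ as mean--square limits, along \emph{one and the same} sequence of elementary additive functionals $\int_0^\cdot h_n(\xi_s)\,ds$ with $h_n$ bounded and continuous, and then to upgrade the two resulting limit relations --- which a priori hold under two different laws on path space --- to the claimed pathwise identity by means of the equivalence of those laws furnished by Girsanov's theorem.

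Fix $T>0$. Since $\nu$ is of Kato's class, the two--sided Gaussian bounds \eqref{eq_gaussian_estimates} show that $\nu$ also satisfies Condition \eqref{Cond_A} for the process $(\varphi_t)_{t\ge0}$, so both $A^{\nu,w}_t$ and $A^{\nu,\varphi}_t$ exist, and the kernels $k^w_t$ and $k^\varphi_t(x,y):=\int_0^t p^\varphi_s(x,y)\,ds$ are comparable. I would take $h_n:=\rho_n*\nu$ for a standard mollifier $\rho_n$; these are non--negative, bounded and continuous. Using the Kato property --- splitting the relevant integrals over $\{\,\|x-y\|<\delta\,\}$, where smallness is uniform in $x$, and over $\{\,\|x-y\|\ge\delta\,\}$, where the kernels are bounded and ordinary mollification applies --- one checks that
$$
\lim_{n\to\infty}\ \sup_{u\le T}\ \sup_{x\in\R}\left|\int_{\R}k^w_u(x,y)h_n(y)\,dy-\int_{\R}k^w_u(x,y)\nu(dy)\right|=0,
$$
and the same relation with $k^w$ replaced by $k^\varphi$. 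Writing $B_{n,t}:=\int_0^t h_n(\xi_s)\,ds$ for the corresponding (literally identical) measurable functional on $C([0,t],\R)$, Theorem \ref{Theorem_Convergence_characteristics} applied once to the Wiener process and once to $(\varphi_t)_{t\ge0}$ yields, for every $x\in\R$,
$$
A^{\nu,w}_t=\llim_{n\to\infty}B_{n,t}\ \text{ in }L^2(\mathds{P}^w_x),\qquad
A^{\nu,\varphi}_t=\llim_{n\to\infty}B_{n,t}\ \text{ in }L^2(\mathds{P}_x),
$$
where $\mathds{P}^w_x$ denotes the law of $(x+w_t)_{t\ge0}$ and $\mathds{P}_x$ the law of $(\varphi_t(x))_{t\ge0}$.

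Now fix $x\in\R$. By Girsanov's theorem $\mathds{P}_x$ and $\mathds{P}^w_x$ are mutually absolutely continuous on $\mathcal{F}_t$ for every $t$. Hence convergence of $B_{n,t}$ to $A^{\nu,w}_t$ in $L^2(\mathds{P}^w_x)$, which gives convergence in $\mathds{P}^w_x$--probability, also gives convergence in $\mathds{P}_x$--probability; since $B_{n,t}\to A^{\nu,\varphi}_t$ in $L^2(\mathds{P}_x)$ as well, uniqueness of the limit in probability forces $A^{\nu,w}_t=A^{\nu,\varphi}_t$ $\mathds{P}_x$--almost surely. As $\mathds{P}_x$ is the image of $\P$ under $\omega\mapsto\varphi_\cdot(x)(\omega)$, transporting this identity back along the trajectory yields $A^{\nu,w}_t(\varphi_\cdot(x))=A^{\nu,\varphi}_t(\varphi_\cdot(x))$ $\P$--almost surely for each fixed $t$; taking a common exceptional set over rational $t$ and using continuity of both sides in $t$ gives the statement.

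The delicate point is the construction step: exhibiting a single sequence $h_n$ of bounded continuous functions whose characteristics converge in the sense of \eqref{eq_uni_conv_char} \emph{simultaneously} for the Wiener process and for $(\varphi_t)_{t\ge0}$. This is exactly where the two--sided Gaussian estimates \eqref{eq_gaussian_estimates} and the defining Kato property are used; the remainder is soft measure--theoretic bookkeeping.
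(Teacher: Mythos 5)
Your argument is correct in outline and ends up structurally close to the paper's own proof, but it is not the same route. The paper also pushes a single sequence of absolutely continuous functionals through both laws, but it uses Dynkin's canonical approximation $B^h_t=\int_0^t h^{-1}f^w_h(\xi_s)\,ds$ from Theorem \ref{Theorem_sufficient_condition} rather than your mollifications $h_n=\rho_n*\nu$: Theorem \ref{Theorem_sufficient_condition} already gives $B^h\to A^{\nu,w}$ in mean square under the Wiener law, Girsanov transfers this to convergence in probability along $\varphi_\cdot(x)$, and the Appendix then shows (uniform second moments, hence $L_1$-convergence, hence convergence of the characteristics to $\int_{\R} k^\varphi_t(x,y)\nu(dy)$) that the limit is a functional with the characteristic of $A^{\nu,\varphi}$, so Proposition \ref{Proposition_uniquely_defined} finishes. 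You replace this asymmetric step by two symmetric applications of Theorem \ref{Theorem_Convergence_characteristics}, one under each law, and identify the two limits by uniqueness of limits in probability under equivalent measures; that variant is sound, and your measure-theoretic bookkeeping (transfer of convergence in probability via a.s.-convergent subsequences, pullback along $\omega\mapsto\varphi_\cdot(x)$) is fine. What both proofs really hinge on is the same analytic fact, which you correctly flag as the delicate point: uniform convergence of the characteristics computed with the kernel $k^\varphi$. One caution there: on the region $\{\|x-y\|\ge\delta\}$, boundedness of $k^\varphi_u(x,\cdot)$ is not enough for ``ordinary mollification'' against a measure, since $\int_{\R} k^\varphi_u(x,y)h_n(y)\,dy=\int_{\R}\bigl(k^\varphi_u(x,\cdot)*\check\rho_n\bigr)(y)\,\nu(dy)$; you need equicontinuity of $y\mapsto k^\varphi_u(x,y)$ uniformly in $x$ and $u\le T$, which is exactly the ingredient the paper's Appendix supplies through the Gaussian bounds \eqref{eq_gaussian_estimates} for small times together with equicontinuity of $p^\varphi_s(x,\cdot)$ for $s$ bounded away from zero, while the near-diagonal part for $h_n$, uniformly in $n$, needs the convolution-commutation trick of Lemma \ref{Lemma_Converg_w_Functionals} (plus the routine check that $\rho_n*\nu$ is bounded, which the Kato property gives). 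With those points made explicit your proof closes; the paper's choice of $h^{-1}f^w_h$ buys a shorter computation because the double Gaussian integral collapses by the semigroup property, while your choice buys a more symmetric and arguably more transparent identification of the two functionals.
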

\begin{proof}
For $x\in\R$, denote by $(w_t(x))_{t\geq0}$ the process $(x+w_t)_{t\geq0}$. According to Theorem \ref{Theorem_sufficient_condition},
$$
A_{t}^{\nu,w}(w_.(x))=\llim_{h\downarrow 0}\int_0^t\frac{f_h^w(w_s(x))}{h}ds.
$$
Then by the Girsanov theorem,
\begin{equation}\label{eq_Girsanov_theorem}
A_t^{\nu,w}(\varphi_.(x))=\Plim_{h\downarrow0}\int_0^t\frac{f_h^w(\varphi_s(x))}{h}ds,
\end{equation}
where $\Plim$ means the limit in probability.

It  remains to show that the characteristics of $\int_0^t\frac{f_h^w(\varphi_s(x))}{h}ds$
converge uniformly to $\int_{\R}k_t^{\varphi}(x,y)\nu(dy)$ as $h\downarrow 0$ (see Theorem \ref{Theorem_Convergence_characteristics}). This proof is routine and  technical, so we postpone it to Appendix.
\end{proof}

\section{The main result}\label{Section_main}
Let $a$ be a bounded measurable function of bounded variation.
Denote by
$\nabla a$ the matrix $\left(\frac{\partial a^i}{\partial
x_j}\right)_{1\leq i,j \leq d}$ and for $1\leq i,j\leq	d$, by $\mu^{ij}$ the signed measure $\frac{\partial a^i}{\partial x_j}$. Let $\mu^{ij}=\mu^{ij,+}-\mu^{ij,-}$ be the Hahn-Jordan decomposition of $\mu^{ij}$.
Further on we suppose that for all
$1\leq i,j \leq d,$ the measure
$|\mu^{ij}|=\mu^{ij,+}+\mu^{ij,-}$ belongs to Kato's class.

By Theorem  \ref{Theorem_sufficient_condition}, there exist $W$-functionals $A^{\mu^{ij, \pm},w}_t$ (we will denote the corresponding mappings by $A_t^{ij,\pm}(\cdot)$) with their characteristics defined according to the formula
$$
f_t^{ij,\pm}(x)=\int_{\R}k_t^w(x,y)\mu^{ij,\pm}(dy).
$$
Denote $A^{ij}_t=A^{ij,+}_t-A^{ij,-}_t, \ A_t=(A_t^{ij})_{1\leq i,j \leq d}.$

\begin{Remark}\label{Remark_HJ_dec} Assume that the measure $\mu^{ij}$ can be represented in the form $\mu^{ij}=\tilde{\mu}^{ij,+}-\tilde{\mu}^{ij,-}$, where $\tilde{\mu}^{ij,+},\tilde{\mu}^{ij,-}$ are from Kato's class and are not necessarily orthogonal. Then $A^{\mu^{ij,+}}-A^{\mu^{ij,-}}=A^{\tilde{\mu}^{ij,+}}-A^{\tilde{\mu}^{ij,-}}$.
\end{Remark}

\begin{Remark}
Recall that the mappings $A_t^{ij,+}, A_t^{ij,-}$ are continuous and monotonous in $t$. So the function $t\to A_t^{ij}$ is a continuous function of bounded variation  on $[0,T]$ almost surely.
\end{Remark}

The main result on differentiability of a flow generated by equation (\ref{eq_main}) with respect to the initial conditions is given in the following theorem.
\begin{Theorem}\label{Theorem_main}
Let $a:\R\to\R$ be  such that for all $1\leq i \leq d,$ $a^i$ is a function of bounded variation on $\R$. Put $\mu^{ij}=\frac{\partial a^i}{\partial x_j}, \ 1\leq i,j\leq d$.
Assume that the measures $|\mu^{ij}|, 1\leq i,j\leq d,$ belong to Kato's class.
Let $Y_t(x), \ t\geq 0$, be a solution to the integral equation
\begin{equation}\label{eq_derivative_main}
Y_t(x)=E+\int_0^t dA_s(\varphi(x))Y_s(x),
\end{equation}
where $E$ is the $d\times d$-identity matrix, the integral on the right-hand side of (\ref{eq_derivative_main}) is the Lebesgue-Stieltjes  integral with respect to the continuous function of bounded variation $t\rightarrow A_t(\varphi(x))$.

Then $Y_t(x)$ is the derivative of $\varphi_t(x)$ in $L_p$-sense: for all $p>0$,  $x\in\R$, $h\in\R$, $t\geq0$,
\begin{equation}\label{eq_derivative_main2}
\mathds{E}\left\|\frac{\varphi_t(x+\varepsilon h)-\varphi_t(x)}{\varepsilon}-Y_t(x)h\right\|^p\to 0, \ \varepsilon\to 0,
\end{equation}
where $\|\cdot\|$ is a norm in the space $\mathds{R}^d$.
Moreover,
$$
P\left\{\forall t\geq0: \varphi_t(\cdot)\in W_{p,loc}^1(\R,\R), \nabla\varphi_t(x)=Y_t(x) \ \mbox{for} \ \lambda\mbox{-a.a.} \ x\right\}=1,
$$
where $\lambda$ is the Lebesgue measure on $\R$.
\end{Theorem}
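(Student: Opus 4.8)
The plan is to obtain the representation by a careful approximation argument: replace the discontinuous drift $a$ by a sequence of smooth, bounded drifts $a_n$ (e.g. mollifications $a_n=a*\rho_{1/n}$), so that $\nabla a_n$ is smooth, $\|a_n\|_\infty\leq\|a\|_\infty$, and $\partial a_n^i/\partial x_j=\mu^{ij}*\rho_{1/n}$. Let $\varphi_t^n(x)$ denote the flow for $a_n$ and $Y_t^n(x)$ its classical Jacobian, which satisfies the linear ODE $Y_t^n(x)=E+\int_0^t\nabla a_n(\varphi_s^n(x))\,Y_s^n(x)\,ds$. The strategy is to pass to the limit in each factor: (i) show $\varphi_t^n(x)\to\varphi_t(x)$ in a strong enough sense (e.g. $L_p(\Omega)$, uniformly in $t$ on compacts, and with control uniform in $x$ on compacts), using Girsanov and the Gaussian estimates \eqref{eq_gaussian_estimates}; (ii) show that the additive functional $A_{n,t}(g):=\int_0^t\nabla a_n(g_s)\,ds$ evaluated along $\varphi^n$ converges to $A_t(\varphi(x))$; and (iii) conclude that $Y_t^n(x)\to Y_t(x)$, where $Y_t(x)$ solves \eqref{eq_derivative_main}, by a Gronwall-type stability estimate for linear equations driven by converging finite-variation integrators.

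For step (ii), the key input is Theorem \ref{Theorem_Convergence_characteristics} together with the Kato-class hypothesis. The characteristic of $A_{n,t}^{ij,\pm}$ (the W-functional of the Wiener process with density $\mu^{ij,\pm}*\rho_{1/n}$) is $\int_{\R}k_t^w(x,y)(\mu^{ij,\pm}*\rho_{1/n})(dy)$, and since $|\mu^{ij}|$ is Kato, one checks that $\mu^{ij,\pm}*\rho_{1/n}$ is Kato uniformly in $n$ and that these characteristics converge uniformly in $(t,x)$ on $[0,T]\times\R$ to $\int_{\R}k_t^w(x,y)\mu^{ij,\pm}(dy)=f_t^{ij,\pm}(x)$; hence $A_{n,t}^{ij,\pm,w}\to A_t^{ij,\pm,w}$ in mean square by Theorem \ref{Theorem_Convergence_characteristics}, uniformly in $t$ by Proposition \ref{Proposition_uniform_convergence}. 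By Lemma \ref{Lemma_equiv_w_functionals} these functionals of the Wiener path may be identified with the corresponding functionals of $\varphi$, so $A_{n,t}(\varphi(x))\to A_t(\varphi(x))$. One then needs the joint convergence $(\varphi^n,A_n(\varphi^n))\to(\varphi,A(\varphi))$; this is where one must be careful, since $\varphi^n$ and $A_n$ both vary with $n$. I would handle this by a triangle-inequality argument: bound $|A_{n,t}(\varphi^n(x))-A_t(\varphi(x))|$ by $|A_{n,t}(\varphi^n(x))-A_{n,t}(\varphi(x))|+|A_{n,t}(\varphi(x))-A_t(\varphi(x))|$, controlling the first term via a continuity/stability estimate for W-functionals under perturbation of the path (again using the Kato condition to get a modulus of continuity in mean, uniformly in $n$) and the second by step (ii).

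For step (iii), the linear equation \eqref{eq_derivative_main} has a unique solution because $t\mapsto A_t(\varphi(x))$ is continuous of bounded variation; writing $B_t(x)$ for its total variation, one has the a priori bound $\|Y_t(x)\|\leq\exp\{B_t(x)\}$, and Lemma \ref{Lemma_exponent_moment} gives $\sup_x\E\exp\{pB_t(x)\}<\infty$ for all $p$, which yields uniform $L_p$ bounds on $Y_t^n$ and $Y_t$. Subtracting the equations for $Y^n$ and $Y$ and applying Gronwall for Stieltjes integrals reduces everything to the convergence of the integrators established in step (ii), and gives \eqref{eq_derivative_main2} along a subsequence, hence (by uniqueness of the limit) along the full sequence. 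Finally, to identify $Y_t(x)$ as the genuine Sobolev derivative $\nabla\varphi_t(x)$: for smooth $a_n$ one has $\varphi_t^n(x+\varepsilon h)-\varphi_t^n(x)=\int_0^1 Y_t^n(x+s\varepsilon h)\,\varepsilon h\,ds$; passing to the limit in $n$ (using the uniform-in-$x$-on-compacts convergence of both sides, plus the uniform $L_p$ bounds on $Y^n$) gives the same integral identity with $Y$, which is exactly the statement that $\varphi_t(\cdot)\in W^1_{p,loc}$ with $\nabla\varphi_t=Y_t$ a.e.; the almost-sure-in-$\omega$ version follows from choosing a common full-measure event via countable dense $x$ and continuity.

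The main obstacle I expect is the joint convergence in step (ii): proving that the W-functionals $A_{n,\cdot}$ evaluated along the moving paths $\varphi^n$ converge to $A_\cdot(\varphi)$ requires a quantitative stability estimate $\E\sup_{t\leq T}|A_{n,t}(g)-A_{n,t}(\tilde g)|$ in terms of $\sup_{t\leq T}\|g_t-\tilde g_t\|$ that is uniform in $n$, and establishing this for Kato-class measures (rather than bounded densities) is the technical heart of the argument — it is precisely the multidimensional analogue of estimating occupation times / local times that is unavailable directly. I would prove such an estimate by representing the functional via its characteristic, using the Gaussian bounds \eqref{eq_gaussian_estimates} and the moment bound Proposition \ref{Prop_Gikhman_Skorokhod} together with Khas'minskii-type control from Lemma \ref{Lemma_exponent_moment}, and a chaining argument in the time variable.
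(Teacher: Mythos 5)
Your overall architecture (mollify $a$, pass to the limit in the linear Stieltjes equation for the Jacobian via Gronwall, identify the limit as the Sobolev derivative through the Newton--Leibniz identity) matches the paper. The divergence, and the gap, is at the step you yourself flag as the main obstacle: the joint convergence $A_{n,t}(\varphi^n(x))\to A_t(\varphi(x))$. Your plan is a triangle inequality $|A_{n,t}(\varphi^n)-A_{n,t}(\varphi)|+|A_{n,t}(\varphi)-A_t(\varphi)|$, with the first term controlled by a stability estimate of the form $\mathds{E}\sup_{t\le T}|A_{n,t}(g)-A_{n,t}(\tilde g)|\lesssim\omega(\sup_{t\le T}\|g_t-\tilde g_t\|)$ \emph{uniformly in $n$}. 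Such an estimate is not available and is likely false as stated: for fixed $n$ the functional $A_{n,t}(g)=\int_0^t\nabla a_n(g_s)\,ds$ is pathwise continuous in $g$, but its modulus degenerates as $n\to\infty$ because $\|\nabla a_n\|_\infty$ blows up; and the limiting functional $A_t^\nu$ for a genuine Kato-class measure is not a pathwise-defined, uniformly continuous functional on $C([0,T],\R)$ at all (already for $d=1$, $\nu=\delta_0$ it is local time, defined only a.s.\ under the relevant law). So the ``quantitative stability in sup norm, uniform in $n$'' that your chaining argument would need is precisely the multidimensional occupation-time control that is unavailable, and sketching Gaussian bounds plus Khas'minskii does not close it. The paper avoids this entirely: it proves convergence $A^{ij,\pm}_{n,t}\to A^{ij,\pm}_t$ in probability under the Wiener measure (your step (ii), same as the paper's Lemma 4), and then transfers it to the moving arguments $\varphi^n$ by a general lemma (Proposition 6, from Kulik--Pilipenko/Bogachev): if $\xi_n\to\xi_0$ in probability, $h_n\to h_0$ in $\nu$-measure, and the laws of $\xi_n$ are absolutely continuous w.r.t.\ $\nu$ with uniformly integrable Girsanov densities, then $h_n(\xi_n)\to h_0(\xi_0)$ in probability. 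The uniform integrability is checked by an exponential-moment computation with $\sup_n\|a_n\|_\infty\le\|a\|_\infty$. If you want a complete proof along your lines, you should replace the pathwise-stability step by this absolute-continuity argument (together with the identification $A^{\nu,w}_t(\varphi)=A^{\nu,\varphi}_t(\varphi)$, the paper's Lemma 2).

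A second, smaller gap: from the identity $\varphi_t(x+\alpha h)=\varphi_t(x)+\int_0^\alpha Y_t(x+uh)\,du$ you cannot conclude \eqref{eq_derivative_main2} at a \emph{fixed} $x$ without knowing that $y\mapsto Y_t(y)$ is continuous at $x$ in probability (hence in $L_p$, using the uniform exponential moments). Your proposal jumps from the integral identity to both the Sobolev statement and \eqref{eq_derivative_main2}; the Sobolev part is fine, but the pointwise-in-$x$ $L_p$-derivative requires exactly this continuity, which the paper proves in a separate final lemma (continuity of $A_t^\nu(\varphi(x))$ in $x$, via shifting the measure $\nu$ by $x-x_0$, truncating to balls and using Dynkin's Theorem 8.4, again through the same absolute-continuity lemma). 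That step needs to be added to your argument.
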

\begin{Remark}
The  differentiability was proved in \cite{Fedrizzi+13b, Mohammed+12}. We give a representation for the derivative. Note that the Sobolev derivative is defined up to the Lebesgue null set.
\end{Remark}

\begin{Remark}
Consider the non-homogeneous SDE
\begin{equation*}
\left\{
\begin{aligned}
d\vp_t(x)&=a(t,\vp_t(x))dt+dw_t,\\
\vp_0(x)&=x.\\
\end{aligned}\right.
\end{equation*}
Similarly to the arguments given in Section \ref{Section_W_functionals} a theory of non-homogeneous additive functionals of non-homogeneous Markov processes can be constructed. All the formulations and proofs can be literally rewritten with natural necessary modifications. Unfortunately, there are no corresponding references, therefore we did not carry out the corresponding reasonings.
\end{Remark}

Consider examples of functions $a$ for which $|\mu^{ij}|, 1\leq i,j\leq d,$ are measures of Kato's class.
\begin{Example}\label{Remark_Cond_For_Smooth_Functions}
Let for all $1\leq i\leq d$, ${a^i}$ be a Lipschitz function. By Rademacher's theorem \cite{Federer69} the Frech\'et derivatives $\mu^{ij}=\frac{\partial a^i}{\partial x_j}$ exist almost surely w.r.t. the Lebesgue measure. It is easy to verify that they are bounded and the Frech\'et derivative coincides with the derivative considered in the generalized sense. Then $|\mu^{ij}|$ belongs to Kato's class.

Let now $h\in C^1(\R, \R), D$ be a bounded domain in $\R$ with $C^1$ boundary $\partial D$. Put $a(x)=h(x)\mathds{1}_{x\in D}$. It follows from Example \ref{example_W_manifold} that for all $1\leq i,j\leq d,$ $|\mu^{ij}|$ is a measure of Kato's class because (cf. \cite{Vladimirov67})
$$
\mu^{ij}(dx)=\frac{\partial a^i}{\partial x_j}(x)\mathds{1}_{x\in{D}}dx+ h^i(x)\cos (n_j(x))\sigma_{\partial D}(dx),
$$
where $n(x)=(n_1(x),\dots,n_d(x))$ is the outward unit normal vector at the point $x\in\partial D.$

Condition (\ref{Cond_A_prime}) is also satisfied by the measure generated by $a$ being a linear combination of the form
\begin{equation}\label{eq_Func_a}
h_0(x)+\sum^m_{k=1}h_k(x)\mathds{1}_{x\in D_k},
\end{equation}
where $h_0\in\Lip(\R,\R)$, $h_k\in C^1(\R,\R), \ 1\leq k\leq d,$ $D_k$ is a bounded domain in $\R$ with $C^1$ boundary.

Further examples of $a$ can be obtained as the limits of sequences of the functions of form (\ref{eq_Func_a}).

In one-dimensional case all the functions of bounded variation generate measures belonginig to Kato's class (see Example \ref{example_local_time}).

See also Example \ref{example_W_Hausd_meas} showing that if $|\mu^{ij}|$ are
``Hausdorff-type'' measures with a parameter greater than $(d-1)$, then $a$ satisfies assumptions of the Theorem.
\end{Example}

The idea of the proof of Theorem \ref{Theorem_main} is to
approximate the solution of equation (\ref{eq_main}) by solutions
of SDEs with smooth coefficients.
The definition and properties of approximating equations are given
in Sections \ref{Section_Approximation},
\ref{Section_Convergence_Derivatives}. The proof of the Theorem itself
is presented in Section \ref{Section_Proof}.

\section{Approximation by SDEs with smooth
coefficients}\label{Section_Approximation} For $n\geq1,$ let
$g_n\in C_0^{\infty}(\R)$ be a non-negative function such that
$\int_{\mathds{R}^d}g_n(z)dz=1$, and $g_n(x)=0, \ |x|\geq 1/n$.
Put
\begin{equation}\label{eq_a_n}
a_n(x)=(g_n\ast a)(x)=\int_{\mathds{R}^d} g_n(x-y)a(y)dy, \
x\in\mathds{R}^d, \ \ n\geq1,
\end{equation}
where the function $a$ satisfies the assumptions of Theorem \ref{Theorem_main}.
Note that
\begin{equation}\label{eq_ineq_for_norms}
\sup_n\|a_n\|_{\infty}\leq \|a\|_{\infty},
\end{equation}
and $a_n\to a, \ n\to \infty,$ in  $L_{1,loc}(\R).$ Passing to
subsequences we may assume without loss of generality that
$a_n(x)\to a(x), \ n\to\infty,$ for almost all $x$ w.r.t. the
Lebesgue measure.

Consider the SDE
\begin{equation}\label{eq_main_n} \left\{
\begin{aligned}
d\vp_{n,t}(x)&=a_n(\vp_{n,t}(x))dt+dw_t,\\
\vp_{n,0}(x)&=x, \ x\in\mathds{R}^d.
\end{aligned}\right.
\end{equation}
Put $\nabla a_n=\left(\frac{\partial a_n^i}{\partial
x_j}\right)_{1\leq i,j \leq d}$. Denote by $Y_{n,t}(x)$ the
matrix of derivatives of $\varphi_{n,t}(x)$ in $x$, i.e.,
$Y_{n,t}^{ij}(x)=\frac{\partial\varphi_{n,t}^i(x)}{\partial x_j}.$ Then $Y_{n,t}(x)$ satisfies the equation
\begin{equation}\label{eq_derivative}
Y_{n,t}(x)=E+\int_0^t\nabla a_n(\vp_{n,s}(x))Y_{n,s}(x)ds,
\end{equation}
where $E$ is the $d$-dimensional identity matrix.
\begin{Lemma}\label{Lemma_Converg_Solutions} {\it For each $p\geq1$,
\begin{enumerate}[1)]
\item for all $t\geq0$ and any compact set $U\in\mathds{R}^d$,
$$
\sup_{x\in U, \ n\geq
1}\left(\mathds{E}(\|\vp_t^n(x)\|^p+\|\vp_t(x)\|^p)\right)<\infty;
$$
\item for all   $x\in\mathds{R}^d, \ T\geq0,$
$$
\mathds{E}\left(\sup_{0\leq t\leq
T}\|\vp^n_t(x)-\vp_t(x)\|^p\right)\to 0 \ \mbox{as} \ n\to \infty,
$$
where $\|\cdot\|$ is a norm in the space $\R$.
\end{enumerate} }
\end{Lemma}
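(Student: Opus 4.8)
The plan is to prove Lemma \ref{Lemma_Converg_Solutions} by a standard combination of Gronwall-type estimates, the Girsanov transformation, and the $L_{1,loc}$-convergence $a_n\to a$, exploiting the Gaussian bounds \eqref{eq_gaussian_estimates} to turn spatial $L_1$-convergence of the drifts into $L_p$-convergence of the solutions.

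For part 1), the moment bounds are immediate: writing $\vp_{n,t}(x)=x+\int_0^t a_n(\vp_{n,s}(x))\,ds+w_t$ and using $\sup_n\|a_n\|_\infty\le\|a\|_\infty$ from \eqref{eq_ineq_for_norms}, we get $\|\vp_{n,t}(x)\|\le\|x\|+t\|a\|_\infty+\|w_t\|$, so $\E\|\vp_{n,t}^n(x)\|^p\le C_p(\|x\|^p+t^p\|a\|_\infty^p+\E\|w_t\|^p)$, and the right-hand side is bounded uniformly in $n$ and in $x$ over a compact set $U$; the same bound holds for $\vp_t(x)$ with $a$ in place of $a_n$. This requires no cancellation and no special structure.

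For part 2), I would fix $x$ and $T$, and first reduce to controlling $\E\int_0^T\|a_n(\vp_s(x))-a(\vp_s(x))\|\,ds$ along the \emph{exact} solution: from the Girsanov theorem the law of $(\vp_s(x))_{s\le T}$ is equivalent to that of $(x+w_s)_{s\le T}$, the process $(x+w_s)$ has a Gaussian density, and $a_n\to a$ in $L_{1,loc}$ together with the uniform bound $\sup_n\|a_n\|_\infty\le\|a\|_\infty$ gives, via dominated convergence against the Gaussian kernel (a localization argument is needed to pass from $L_{1,loc}$ to the whole space, using that the Gaussian density has light tails and all drifts are uniformly bounded), that $\E\int_0^T\|a_n(x+w_s)-a(x+w_s)\|\,ds\to0$; transferring through the Radon--Nikodym density, which lies in every $L_q$ by Lemma \ref{Lemma_exponent_moment} or a direct Novikov estimate, yields $\E\int_0^T\|a_n(\vp_s(x))-a(\vp_s(x))\|\,ds\to0$. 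Then writing $\vp_{n,t}(x)-\vp_t(x)=\int_0^t\big(a_n(\vp_{n,s}(x))-a_n(\vp_s(x))\big)\,ds+\int_0^t\big(a_n(\vp_s(x))-a(\vp_s(x))\big)\,ds$, the second term is handled by the above, but the first term is problematic because $a_n$ is only Lipschitz with a constant blowing up in $n$, so a naive Gronwall argument fails.

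The main obstacle is precisely this first term, and I would resolve it by a Krylov-type / Zvonkin-type argument rather than brute-force Gronwall: one introduces the solution $u_n$ of the PDE $\tfrac12\Delta u_n+a_n\cdot\nabla u_n-\lambda u_n=-a_n$ for $\lambda$ large, whose gradient is small in sup-norm uniformly in $n$ (this uses only $\sup_n\|a_n\|_\infty\le\|a\|_\infty$ and the Kato-class bound on $|\mu^{ij}|$, via elliptic estimates), applies It\^o's formula to $u_n(\vp_{n,t}(x))$ and $u_n(\vp_t(x))$ to remove the drift, and thereby reduces the difference $\vp_{n,t}(x)-\vp_t(x)$ to terms that are genuinely Lipschitz (with a uniformly small constant) plus error terms controlled by $\|a_n-a\|_{L_1}$ on Gaussian-weighted regions. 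After this transformation Gronwall's inequality applies with an $n$-independent constant, giving $\E\sup_{t\le T}\|\vp_{n,t}(x)-\vp_t(x)\|^p\to0$. An alternative, if one prefers to stay elementary, is to use the strong uniqueness and stability estimates of Veretennikov \cite{Veretennikov81} directly, since those already provide exactly the continuous dependence on the drift in the appropriate topology; I would cite that as the fallback. Either way, the combinatorial bookkeeping of passing from $L_{1,loc}$ to a genuine estimate, and the uniform-in-$n$ control of the auxiliary PDE, are the technical crux; the rest is routine.
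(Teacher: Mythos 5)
Part 1) is fine and is exactly the paper's (one\nobreakdash-line) argument. For part 2), note first that the paper does not prove this statement at all: it simply invokes \cite{Luo11}, Theorem 3.4, which is precisely the convergence of solutions under a uniform sup-bound and a.e.\ (or $L_{1,loc}$) convergence of the drifts. Your self-contained Zvonkin-type outline is the standard route behind such results, and your reduction of the drift error to $\E\int_0^T\|a_n(\vp_s(x))-a(\vp_s(x))\|\,ds\to 0$ via Girsanov and the Gaussian density is sound. The genuine gap is at the step you declare routine. After the transformation $\Phi_n=\mathrm{id}+u_n$, the new drift $\lambda u_n$ is indeed uniformly Lipschitz, but the new diffusion coefficient $I+\nabla u_n$ is not: with only $\sup_n\|a_n\|_\infty\le\|a\|_\infty$ (the Kato condition on $|\mu^{ij}|$ plays no role here and buys you nothing extra), uniform elliptic estimates give $\nabla u_n$ uniformly bounded and H\"older, equivalently $\nabla^2u_n$ uniformly in $L^p$, but $\|\nabla^2u_n\|_\infty$ blows up as $a_n$ approaches a discontinuous $a$. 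Hence the martingale difference $\int_0^t\bigl(\nabla u_n(\vp_{n,s}(x))-\nabla u_n(\vp_s(x))\bigr)dw_s$ is not amenable to a plain Gronwall argument: BDG plus the H\"older modulus only gives an inequality of the type $m_n(t)\le \eta_n+C\int_0^t\bigl(m_n(s)+m_n(s)^\alpha\bigr)ds$ with $\alpha<1$, and since $y\mapsto y^\alpha$ fails the Osgood condition at $0$, $\eta_n\to0$ does not force $m_n\to0$ (the comparison ODE started at $0$ has nontrivial solutions). Closing this requires the additional, genuinely nontrivial machinery used in the literature: the pointwise bound $\|\nabla u_n(y)-\nabla u_n(z)\|\le C\|y-z\|\bigl(M|\nabla^2u_n|(y)+M|\nabla^2u_n|(z)\bigr)$ with the Hardy--Littlewood maximal function, together with Krylov--Khasminskii exponential estimates uniform in $n$, or a stochastic Gronwall lemma. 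That is exactly the content hidden in the stability theorem the paper cites.

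A secondary weakness is your fallback: \cite{Veretennikov81} establishes strong existence and uniqueness for bounded measurable drift, but it does not state the $\sup_{0\le t\le T}$, all-moments continuous dependence on the drift that you need; the reference that does is \cite{Luo11}, Theorem 3.4, and simply citing it (as the paper does) is the clean way to dispose of part 2). So either cite Luo, or, if you want a self-contained proof, supply the maximal-function/Krylov-estimate step explicitly; as written, the argument does not close.
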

\begin{proof}
Statement 1) follows from the uniform boundedness of the coefficients
and the finiteness of the moments of a Wiener process; 2) is proved in
\cite{Luo11}, Theorem 3.4.
\end{proof}

For  $1\leq i,j \leq d,$ put $\mu_n^{ij}=\frac{\partial
a_n^i}{\partial x_j}.$   By the properties of convolution of a
generalized function (see \cite{Vladimirov67}, Ch. 2, \S7),
$$
\nabla a_n=\nabla a\ast g_n.
$$
For each $n\geq
1$, $1\leq i,j \leq d,$ put $\mu_n^{ij,\pm}=\mu^{ij,\pm}\ast g_n$ and $\mu_n^{ij}=\mu_n^{ij,+}-\mu_n^{ij,-}$ (c.f. Remark \ref{Remark_HJ_dec}).  Then, according to
Theorem \ref{Theorem_sufficient_condition},
there exist W-functionals $A_{n,t}^{ij,\pm}$ of a Wiener process on $\R$ which
correspond to the measures $\mu_n^{ij,\pm}$ and have
characteristics of the form
\begin{equation}\label{eq_f_t}
f_{n,t}^{ij,\pm}(x)=\int_{\R}k_t^w(x,y)\mu_n^{ij,\pm}(dy), \ 1\leq
i,j\leq d.
\end{equation}
The
functional $A_{n,t}^{ij}=A_{n,t}^{ij,+}-A_{n,t}^{ij,-}$
is given by the formula
\begin{equation}\label{eq_functional_n}
{A_{n,t}^{ij}}=\int_0^t  \frac{\partial a_n^i}{\partial x_j}(w_u)du
\end{equation}
(see Example 2).

\begin{Lemma}\label{Lemma_Converg_w_Functionals}

{\it For each} $T>0$, $x\in\R$, $\varepsilon>0$, $1\leq i,j \leq d$,
$$
\P_{w(x)}\left\{\sup_{0\leq t\leq T}\left|A_{n,t}^{ij,\pm}-A_t^{ij,\pm}\right|>\varepsilon\right\}\to 0, \ n\to\infty,
$$
where $\P_{w(x)}$ is the distribution of the process $(x+w_t)_{t\geq0}.$

\end{Lemma}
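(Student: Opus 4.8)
The plan is to reduce the assertion to the convergence of the \emph{characteristics} of the approximating W-functionals and then feed it into the general machinery of Section~\ref{Section_W_functionals}. Recall that $A_{n,t}^{ij,\pm}$ and $A_t^{ij,\pm}$ are W-functionals of the Wiener process with characteristics $f_{n,t}^{ij,\pm}(x)=\int_\R k_t^w(x,y)\mu_n^{ij,\pm}(dy)$ and $f_t^{ij,\pm}(x)=\int_\R k_t^w(x,y)\mu^{ij,\pm}(dy)$, respectively. First I would prove that for each $T>0$,
$$
\lim_{n\to\infty}\sup_{0\le u\le T}\sup_{x\in\R}\bigl|f_{n,u}^{ij,\pm}(x)-f_u^{ij,\pm}(x)\bigr|=0 .
$$
Granting this, Theorem~\ref{Theorem_Convergence_characteristics} yields that $f_u^{ij,\pm}$ is the characteristic of a W-functional which is the mean-square limit of $A_{n,u}^{ij,\pm}$, and by the uniqueness statement of Proposition~\ref{Proposition_uniquely_defined} this limit coincides with $A_u^{ij,\pm}$. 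In particular, for each fixed $t$, $A_{n,t}^{ij,\pm}\to A_t^{ij,\pm}$ in $L_2(\P_{w(x)})$, hence in probability; then Proposition~\ref{Proposition_uniform_convergence}, applied to the non-negative additive functionals $A_{n,t}^{ij,\pm}$, upgrades this to convergence uniform in $t\in[0,T]$, which is exactly the claim. (Working with the Hahn--Jordan parts separately is essential here, since $A_{n,t}^{ij}=A_{n,t}^{ij,+}-A_{n,t}^{ij,-}$ is not non-negative.)

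So the entire proof reduces to the displayed uniform convergence of characteristics. Since $\mu_n^{ij,\pm}=\mu^{ij,\pm}\ast g_n$, Fubini's theorem gives, writing $\mu=\mu^{ij,\pm}$,
$$
f_{n,u}^{ij,\pm}(x)-f_u^{ij,\pm}(x)=\int_\R\Bigl(\int_{\|v\|\le 1/n}\bigl[k_u^w(x,z+v)-k_u^w(x,z)\bigr]g_n(v)\,dv\Bigr)\mu(dz).
$$
I would split the $\mu$-integral over a near-diagonal set $\{\|z-x\|\le 2\delta\}$ and its complement, $\delta>0$ a small parameter to be fixed. On the complement, once $1/n<\delta$ the whole segment from $z$ to $z+v$ stays at distance $\ge\delta$ from $x$, where $k_u^w(x,\cdot)$ is $C^1$ and a direct estimate of the Gaussian integral (cf.~\eqref{eq_character_2}) gives $|\nabla_y k_u^w(x,y)|\le C(d,T)\|x-y\|^{1-d}\le C(d,T)\delta^{1-d}$ uniformly in $u\le T$; hence this part is bounded by $n^{-1}C(d,T)\delta^{1-d}\,\mu(\R)$, which tends to $0$ as $n\to\infty$ for fixed $\delta$ (here $\mu(\R)<\infty$ because $a^i$ is of bounded variation).

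On the near-diagonal set I would estimate $|k_u^w(x,z+v)-k_u^w(x,z)|\le k_u^w(x,z+v)+k_u^w(x,z)$ and use the pointwise bound $k_u^w(x,y)\le\Phi_{d,T}(\|x-y\|)$ extracted from \eqref{eq_character_2}, with $\Phi_{d,T}(\rho)$ comparable to $\rho^{2-d}$, to $1+\log(1/\rho)$, or bounded, according as $d\ge3$, $d=2$, or $d=1$. Since $1/n<\delta$ keeps $z+v$ within distance $3\delta$ of $x$, both resulting terms are dominated by $\sup_{x'\in\R}\int_{\|z-x'\|\le 3\delta}\Phi_{d,T}(\|z-x'\|)\,\mu(dz)$, and this can be made smaller than any prescribed $\eta>0$, uniformly in $x$ and $u\le T$, by choosing $\delta$ small, in view of the characterisation of Kato's class in Remark~\ref{Cond_A_equiv}. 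Combining the two estimates gives the uniform convergence of the characteristics.

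The genuinely delicate point — the expected main obstacle — is the singularity of $k_u^w(x,y)$ on the diagonal $y=x$ (for $d\ge2$): the mollification $g_n\ast\mu$ smears mass across a $1/n$-neighbourhood of the diagonal, precisely where the kernel blows up, so one cannot simply pass to the limit by continuity of $k_u^w$. The remedy above is to first absorb the contribution of a \emph{fixed} neighbourhood of the diagonal into the Kato smallness, uniformly in $n$, $x$ and $u$, and only afterwards exploit the smoothness of $k_u^w$ off the diagonal against the shrinking support of $g_n$. If one prefers not to invoke finiteness of $\mu^{ij,\pm}$ in the far-diagonal estimate, the same argument localises to a ball: the process $w(\cdot)$ started at $x$ leaves $B(x,R)$ before time $T$ with probability tending to $0$ as $R\to\infty$, and on the complementary event the functionals $A_{n,t}^{ij,\pm}$ and $A_t^{ij,\pm}$ depend only on the restriction of $\mu^{ij,\pm}$ to $B(x,R+1)$.
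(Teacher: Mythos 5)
Your overall strategy coincides with the paper's: reduce the lemma to the uniform (in $t\le T$ and in $x$) convergence of the characteristics $f^{ij,\pm}_{n,t}\to f^{ij,\pm}_t$, identify the $L_2$-limit of $A^{ij,\pm}_{n,t}$ with $A^{ij,\pm}_t$ via Theorem \ref{Theorem_Convergence_characteristics} and Proposition \ref{Proposition_uniquely_defined}, and upgrade to uniform convergence in probability by Proposition \ref{Proposition_uniform_convergence}. Where you genuinely diverge is in proving the convergence of characteristics: you split the \emph{space} integral into a near-diagonal ball and its complement, while the paper splits the \emph{time} integral $k^w_t=\int_0^\delta+\int_\delta^t$, kills the small-time part uniformly in $n$ by the Kato condition combined with the convolution inequality $\sup_x\bigl(|\mu|\ast k_\delta\ast g_n\bigr)\le\sup_x\bigl(|\mu|\ast k_\delta\bigr)$, and treats the part over $[\delta,t]$ as a bounded equicontinuous function of $x$ whose mollifications converge uniformly. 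The paper's route needs neither any bound on the total mass of $|\mu^{ij}|$ nor gradient estimates for the kernel; these are exactly the two places where your version needs repair.

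Concretely: (i) in the near-diagonal estimate for $d=1$, Remark \ref{Cond_A_equiv} gives only $\sup_x\nu([x-1,x+1])<\infty$, not smallness, and your claim that $\sup_{x'}\int_{|z-x'|\le 3\delta}\Phi_{1,T}\,d\mu$ can be made arbitrarily small by shrinking $\delta$ is false for atomic $\mu$ (take $\mu=\delta_0$, i.e.\ precisely the discontinuous one-dimensional drifts motivating the paper). The fix is easy: for $d=1$ the kernel $k^w_u(x,\cdot)$ has no diagonal singularity and is Lipschitz in the space variable uniformly for $u\le T$, so on the near-diagonal set you should bound the \emph{difference} $|k^w_u(x,z+v)-k^w_u(x,z)|\le C/n$ and multiply by the (Kato-bounded) local mass, rather than bounding it by the sum of the two kernels. (ii) In the far-diagonal estimate you invoke $\mu^{ij,\pm}(\R)<\infty$, which fails for the paper's own examples (Lipschitz drift gives $|\mu^{ij}|$ with bounded density and infinite total mass), so the finite-mass version of your bound is not available in general. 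Your localization fallback does work, but it tacitly relies on the locality of W-functionals (the identity $A^{\nu^{(R)}}_t=A^{\nu}_t$ on paths that stay in the ball, i.e.\ Dynkin's Theorem 8.4, which the paper itself uses in Section \ref{Section_Proof}); this should be stated and cited, or replaced by the Gaussian decay of $k^w$ off the diagonal together with the uniform bound $\sup_x|\mu^{ij}|(B(x,1))<\infty$ implied by the Kato condition. With these two repairs your argument is a correct, genuinely different proof of the key convergence \eqref{eq_converg_character}; the paper's convolution-plus-equicontinuity argument simply avoids both issues at the outset.
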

The following simple proposition used for the proof of Lemma
\ref{Lemma_Converg_w_Functionals} is easily checked.
\begin{Proposition}\label{Proposition_convolution_characteristics}
Let $\nu, \nu_n, \ n\geq 1,$ be from the Kato class, $f, f_n, \ n\geq 1,$ be the characteristics of the corresponding W-functionals of a Wiener process, and  the representation
$\nu_n=g_n\ast\nu$ hold true. Then the relation $f_{n,t}=g_n\ast
f_t$ is fulfilled.
\end{Proposition}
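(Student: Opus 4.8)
The plan is to unwind the definition of the characteristics and reduce the claim to an application of Tonelli's theorem together with the translation invariance of the kernel $k_t^w$. By definition the characteristic of the $W$-functional associated with $\nu_n$ is
$$
f_{n,t}(x)=\int_{\R}k_t^w(x,y)\,\nu_n(dy),
$$
and the hypothesis $\nu_n=g_n\ast\nu$ means precisely that $\nu_n$ is absolutely continuous with respect to the Lebesgue measure, with density $y\mapsto\int_{\R}g_n(y-z)\,\nu(dz)$. The whole argument consists in substituting this density into the formula for $f_{n,t}$, interchanging the order of the resulting two integrations, and recognizing the outcome.

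First I would record the one structural fact that is used: by \eqref{eq_character_1} the quantity $k_t^w(x,y)$ depends on $(x,y)$ only through $x-y$, so there is a function $K_t$ on $\R$ with $k_t^w(x,y)=K_t(x-y)$. Since $k_t^w\geq0$, $g_n\geq0$, and $\nu$ is a positive measure, every integrand below is non-negative, and Tonelli's theorem applies without any integrability hypothesis. Thus
$$
f_{n,t}(x)=\int_{\R}\left(\int_{\R}K_t(x-y)\,g_n(y-z)\,dy\right)\nu(dz),
$$
and the change of variables $y=z+u$ identifies the inner integral with $(K_t\ast g_n)(x-z)$, whence $f_{n,t}(x)=\int_{\R}(K_t\ast g_n)(x-z)\,\nu(dz)$.

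The last step is to run the same computation for the right-hand side. Writing $(g_n\ast f_t)(x)=\int_{\R}g_n(x-z)\bigl(\int_{\R}K_t(z-y)\,\nu(dy)\bigr)dz$, applying Tonelli once more and substituting $z=x-v$, one finds that the inner $z$-integral equals $(g_n\ast K_t)(x-y)=(K_t\ast g_n)(x-y)$ by commutativity of convolution of functions; hence $(g_n\ast f_t)(x)=\int_{\R}(K_t\ast g_n)(x-y)\,\nu(dy)$. After renaming the dummy variable integrated against $\nu$, this is exactly the expression obtained for $f_{n,t}(x)$, so $f_{n,t}=g_n\ast f_t$. I do not expect any genuine obstacle here: the only points to verify are the two interchanges of integration, which are immediate from non-negativity via Tonelli, and the finiteness of all quantities, which is supplied by the assumption that $\nu$ and $\nu_n$ belong to Kato's class. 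The entire content of the proposition is therefore the translation invariance of the Wiener kernel combined with Fubini/Tonelli.
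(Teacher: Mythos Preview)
Your argument is correct: the proposition is nothing more than the associativity and commutativity of convolution applied to $f_{n,t}=K_t\ast\nu_n=K_t\ast(g_n\ast\nu)=g_n\ast(K_t\ast\nu)=g_n\ast f_t$, and your careful use of Tonelli together with the translation invariance $k_t^w(x,y)=K_t(x-y)$ is exactly the right justification. The paper does not actually prove this proposition---it merely states that it ``is easily checked''---so your write-up supplies precisely the routine verification the authors had in mind.
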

\begin{proof}[Proof of Lemma \ref{Lemma_Converg_w_Functionals}]
To prove the convergence of functionals in mean square it is sufficient to show that for each $T>0$, $1\leq i,j\leq
d,$
\begin{equation}\label{eq_converg_character}
\lim_{n\to\infty}\sup_{0\leq t \leq
T}\sup_{x\in\R}|f_{n,t}^{ij,\pm}(x)-f_{t}^{ij,\pm}(x)|=0
\end{equation}
 (see Theorem
\ref{Theorem_Convergence_characteristics}). Then the uniform convergence in probability follows from Proposition \ref{Proposition_uniform_convergence}.

For each $0<\delta<t,$
\begin{multline*}
\sup_{x\in\R}\left|f_{n,t}^{ij,\pm}(x)-f_t^{ij,\pm}(x)\right|=
\sup_{x\in\R}\left|\int_{\R}k_t^w(x,y)\left(\mu_n^{ij,\pm}(dy)-\mu^{ij,\pm}(dy)\right)\right|=
I^{\pm}+II^{\pm},
\end{multline*}
where
\begin{equation}\label{eq_I}
I^{\pm}=\sup_{x\in\R}\left|\int_{\R}\left(\mu_n^{ij,\pm}(dy)-\mu^{ij,\pm}(dy)\right)\int_0^{\delta}\frac{1}{(2\pi
s)^{d/2}}\exp\left\{-\frac{\|y-x\|^2}{2s}\right\}ds\right|,
\end{equation}
$$
II^{\pm}=\sup_{x\in\R}\left|\int_{\R}\left(\mu_n^{ij,\pm}(dy)-\mu^{ij,\pm}(dy)\right)\int_{\delta}^t\frac{1}{(2\pi
s)^{d/2}}\exp\left\{-\frac{\|y-x\|^2}{2s}\right\}ds\right|.
$$
We have
\begin{multline*} I^{\pm}\leq
\sup_{x\in\R}\int_{\R}|\mu_n^{ij}|(dy)\int_0^{\delta}\frac{1}{(2\pi
s)^{d/2}}\exp\left\{-\frac{\|y-x\|^2}{2s}\right\}ds+\\
\sup_{x\in\R}\int_{\R}|\mu^{ij}|(dy)\int_0^{\delta}\frac{1}{(2\pi
s)^{d/2}}\exp\left\{-\frac{\|y-x\|^2}{2s}\right\}ds=I_1+I_2.
\end{multline*}
Because of the condition (\ref{Cond_A_prime}), for each $\varepsilon>0$, we
can choose $\delta$ so small that $I_2$ is less then
$\varepsilon/4$. To obtain the same estimate for $I_1$, note that
by the associative, distributive and commutative properties of
convolution (see \cite{Vladimirov67}, Ch. II, \S 7),
\begin{multline*}
I_1=\sup_{x\in\R}(|\mu_n|\ast k_\delta)(x)\leq\left((|\mu|\ast
g_n)\ast k_{\delta}\right)(x)=\sup_{x\in\R}\left(|\mu|\ast(g_n\ast
k_{\delta})\right)(x)=\\
\sup_{x\in\R}\left(|\mu|\ast(k_\delta\ast g_n)\right)(x)=
\sup_{x\in\R}\left((|\mu|\ast k_{\delta})\ast g_n\right)(x)
\leq \sup_{x\in\R}\left(|\mu|\ast k_{\delta}\right)(x)=I_2<\varepsilon/4.
\end{multline*}
We get $I^{\pm}<\varepsilon/2.$

Consider $II^{\pm}$. The functions
$$
q^{ij,\pm}_{\delta,t}(x):=\int_{\R}\mu^{ij,\pm}(dy)\int_\delta^t\frac{1}{(2\pi s)^{d/2}}\exp\left\{-\frac{\|x-y\|^2}{2s}\right\}ds
$$
are  equicontinuous in $x$ for $t\in[\delta,T]$. We have
\begin{equation*}
\sup_{\delta<t<T}II^{\pm}=\sup_{\delta<t<T}\sup_{x\in\R}|(q_{\delta,t}^{ij,\pm}\ast g_n)(x)-q_{\delta,t}^{ij,\pm}(x)|\to 0, \ n\to\infty.
\end{equation*}
Then there exists $n_0$ such that for all $n> n_0$, $\sup_{\delta<t<T}II^{\pm}<\varepsilon/2$.
\end{proof}
\begin{Lemma}\label{Lemma_Converg_Functionals}
{\it For each} $ T>0,\ x\in\R$, $\varepsilon>0$, $1\leq i,j \leq d$,
$$
\mathds{P}\left\{\sup_{0\leq t\leq T}
\left|A_{n,t}^{ij,\pm}(\vp_{n}(x))-A_t^{ij,\pm}(\vp(x))\right|>\veps\right\}\to
0, \ n\to\infty.
$$
\end{Lemma}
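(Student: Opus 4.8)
The plan is to derive the claim from the three preceding lemmas by the triangle inequality
\begin{multline*}
\sup_{0\le t\le T}\bigl|A_{n,t}^{ij,\pm}(\vp_n(x))-A_t^{ij,\pm}(\vp(x))\bigr|
\le \sup_{0\le t\le T}\bigl|A_{n,t}^{ij,\pm}(\vp_n(x))-A_t^{ij,\pm}(\vp_n(x))\bigr|\\
+\sup_{0\le t\le T}\bigl|A_t^{ij,\pm}(\vp_n(x))-A_t^{ij,\pm}(\vp(x))\bigr|,
\end{multline*}
where $A_t^{ij,\pm}(\psi)$ and $A_{n,t}^{ij,\pm}(\psi)$ denote the measurable mappings of a Wiener path (the $W$-functionals $A^{\mu^{ij,\pm},w}$ and $A^{\mu_n^{ij,\pm},w}$) evaluated at $\psi\in C([0,T],\R)$; by Lemma~\ref{Lemma_equiv_w_functionals}, applied both to the bounded drift $a_n$ and to $a$, these agree $\P$-a.s.\ with the $W$-functionals of $\vp_n$ and of $\vp$, so the notation is consistent with that of the statement. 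The recurring device will be a \emph{Girsanov transfer}: since $\sup_n\|a_n\|_\infty\le\|a\|_\infty<\infty$, the law of $\vp_{n,\cdot}(x)$ on $C([0,T],\R)$ is equivalent to the Wiener measure started at $x$, with Radon--Nikodym densities bounded in every $L^p$ uniformly in $n$; hence for any sequence of measurable functionals $\Phi_n\colon C([0,T],\R)\to[0,\infty]$ and any $q>1$ there is $C_q$ independent of $n$ with $\P\{\Phi_n(\vp_{n,\cdot}(x))>\varepsilon\}\le C_q\,\bigl(\P\{\Phi_n(x+w_\cdot)>\varepsilon\}\bigr)^{1/q}$, and likewise with $\vp_{\cdot}(x)$ in place of $\vp_{n,\cdot}(x)$.

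The first term on the right is handled at once: applying the Girsanov transfer to $\Phi_n:=\sup_{0\le t\le T}\bigl|A_{n,t}^{ij,\pm}-A_t^{ij,\pm}\bigr|$ and invoking Lemma~\ref{Lemma_Converg_w_Functionals} (which is exactly the assertion $\P\{\Phi_n(x+w_\cdot)>\varepsilon\}\to0$) gives $\P\{\Phi_n(\vp_{n,\cdot}(x))>\varepsilon\}\to0$. For the second term I would fix the $n$-independent, bounded, continuous function $f_h:=f_h^{ij,\pm,w}(\cdot)=\int_{\R}k_h^w(\cdot,y)\,\mu^{ij,\pm}(dy)$, put $R_{h,t}^n:=h^{-1}\int_0^t f_h(\vp_{n,s}(x))\,ds$ and $R_{h,t}^\infty:=h^{-1}\int_0^t f_h(\vp_s(x))\,ds$, and argue in two steps. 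First, by Theorem~\ref{Theorem_sufficient_condition} together with Proposition~\ref{Proposition_uniform_convergence} applied to the Wiener process, $\sup_{0\le t\le T}\bigl|h^{-1}\int_0^t f_h(x+w_s)\,ds-A_t^{ij,\pm}(x+w_\cdot)\bigr|\to0$ in probability as $h\downarrow0$; since this statement does not involve $n$ at all, the Girsanov transfer upgrades it to
\[
\lim_{h\downarrow0}\ \sup_{n}\ \P\Bigl\{\sup_{0\le t\le T}\bigl|R_{h,t}^n-A_t^{ij,\pm}(\vp_n(x))\bigr|>\varepsilon\Bigr\}=0,
\]
and, with $\vp$ in place of $\vp_n$, to the analogous statement for $R_{h,t}^\infty$ and $A_t^{ij,\pm}(\vp(x))$. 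Second, for each fixed $h$, Lemma~\ref{Lemma_Converg_Solutions} gives $\vp_{n,s}(x)\to\vp_s(x)$ in probability uniformly in $s\le T$, so by bounded convergence (the function $f_h$ being bounded and continuous) $\sup_{0\le t\le T}\bigl|R_{h,t}^n-R_{h,t}^\infty\bigr|\le h^{-1}\int_0^T\bigl|f_h(\vp_{n,s}(x))-f_h(\vp_s(x))\bigr|\,ds\to0$ in $\P$-probability as $n\to\infty$. Inserting $R_{h,\cdot}^n$ and $R_{h,\cdot}^\infty$ into the second term and choosing first $h$ small and then $n$ large closes the argument.

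The one point that requires care — the rest being assembly of earlier results — is the uniformity in $n$ of the approximation $R_{h,\cdot}^n\to A_\cdot^{ij,\pm}(\vp_n(x))$: a priori the speed of this $W$-functional approximation might degrade as $n\to\infty$. The way around it is to approximate through the \emph{fixed}, Wiener-based characteristics $f_h^{ij,\pm,w}$ — rather than through the $\vp_n$-intrinsic characteristics $f_h^{ij,\pm,\vp_n}$, or through mollifications $\mu_n^{ij,\pm}$ of the measures — so that the limit $h\downarrow0$ is a property of the Wiener process alone and every trace of $n$ is confined to the uniformly $L^p$-bounded Girsanov densities; this is precisely where $\sup_n\|a_n\|_\infty\le\|a\|_\infty$ and Lemma~\ref{Lemma_equiv_w_functionals} (for the drift $a_n$) enter. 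The subsidiary facts used along the way — that $f_h^{ij,\pm,w}$ is bounded and continuous and $\sup_z\int_{\R}k_h^w(z,y)\,|\mu^{ij}|(dy)<\infty$ for each fixed $h$, and the uniform $L^p$-bounds on the Girsanov densities — are routine consequences of Kato's condition~\eqref{Cond_A_prime}, the Gaussian estimates~\eqref{eq_gaussian_estimates}, and the boundedness of the drifts.
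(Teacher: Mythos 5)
Your argument is correct, but it takes a genuinely different route from the paper. The paper handles the composition $A^{ij,\pm}_{n,t}(\vp_n(x))$ in one stroke by verifying the four hypotheses of Proposition \ref{Proposition_Kulik} (convergence $\vp_n\to\vp$ from Lemma \ref{Lemma_Converg_Solutions}, convergence of the functionals under $\P_{w(x)}$ from Lemma \ref{Lemma_Converg_w_Functionals}, Girsanov absolute continuity, and uniform integrability of the densities via the uniform bound $\sup_n\|a_n\|_\infty\le\|a\|_\infty$), and then quotes that general composition-convergence result. You instead split by the triangle inequality and treat the two pieces separately: the ``varying functional, varying path'' piece by a quantitative Girsanov--H\"older transfer of Lemma \ref{Lemma_Converg_w_Functionals} (your $L^p$-bound on $\beta_n$ is exactly the estimate the paper computes to get uniform integrability), and the ``fixed functional, varying path'' piece by approximating $A^{ij,\pm}_t$ through the integral functionals $h^{-1}\int_0^t f^w_h(\cdot)\,ds$ of the fixed characteristic (Theorem \ref{Theorem_sufficient_condition} plus Proposition \ref{Proposition_uniform_convergence}), again transferred uniformly in $n$ by Girsanov, and then letting $n\to\infty$ for fixed $h$ using Lemma \ref{Lemma_Converg_Solutions}. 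Your insistence on approximating through the Wiener-based characteristic $f^w_h$ rather than anything $n$-dependent is exactly the right move to secure uniformity in $n$. What each approach buys: the paper's proof is shorter and needs no regularity of $f^w_h$ beyond measurability, since the Lusin/Egorov-type work is hidden inside Proposition \ref{Proposition_Kulik} (Bogachev; Kulik--Pilipenko); yours is more self-contained and effectively re-proves the needed special case of that proposition, at the price of the auxiliary fact that $f^w_h$ is bounded and continuous for fixed $h$ under Kato's condition --- this is indeed standard (split the integral near and away from the diagonal, use Remark \ref{Cond_A_equiv} and the uniform local finiteness of $|\mu^{ij}|$ it implies), but it is an extra step the paper's route avoids, so it should not be left entirely implicit.
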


For the proof we make use of the following proposition
\begin{Proposition}\label{Proposition_Kulik}
{\it Let $X, Y$ be complete separable metric spaces, $(\Omega,
\mathcal{F}, {P})$ be a probability space. Let measurable mappings $\xi_n:\Omega\to
X,$ $h_n: X\to Y$, $n\geq 0$, be such that
\begin{enumerate}[1)]
\item $\xi_n\to\xi_0, \ n\to\infty,$ in probability; \item $h_n\to
h_0, \ n\to\infty,$ in measure $\nu$, where $\nu$ is a probability
measure on X;
\item for all $n\geq 1$ the distribution $P_{\xi_n}$
of $\xi_n$ is absolutely continuous w.r.t. the measure $\nu$; \item
the sequence of densities $\{\frac{dP_{\xi_n}}{d\nu}: \ n\geq1\}$
is uniformly integrable w.r.t. the measure $\nu$.
\end{enumerate}
Then $h_n(\xi_n)\to h_0(\xi_0),  \ n\to\infty,$ in probability.}
\end{Proposition}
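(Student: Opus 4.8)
Write $\rho_X$ and $\rho_Y$ for the metrics of $X$ and $Y$, and for $n\ge1$ let $p_n=dP_{\xi_n}/d\nu$, which exists because $P_{\xi_n}\ll\nu$. Fix $\varepsilon>0$. The plan is to start from the triangle inequality
$$
\rho_Y\big(h_n(\xi_n),h_0(\xi_0)\big)\le\rho_Y\big(h_n(\xi_n),h_0(\xi_n)\big)+\rho_Y\big(h_0(\xi_n),h_0(\xi_0)\big)
$$
and to show that each of the two summands on the right tends to $0$ in $P$-probability.

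For the first summand I would change to the law of $\xi_n$: since $x\mapsto\rho_Y(h_n(x),h_0(x))$ is Borel on $X$, condition~3) gives
$$
P\big(\rho_Y(h_n(\xi_n),h_0(\xi_n))>\varepsilon\big)=\int_{B_n}p_n\,d\nu,\qquad B_n:=\{x\in X:\rho_Y(h_n(x),h_0(x))>\varepsilon\}.
$$
By condition~2), $\nu(B_n)\to0$, and then the uniform integrability of $\{p_n\}$ from condition~4) forces $\int_{B_n}p_n\,d\nu\to0$.

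For the second summand $h_0$ is only Borel, so I would regularize it by Lusin's theorem (legitimate since $\nu$ is a Borel probability measure on the Polish space $X$): for every $m\ge1$ fix a closed set $F_m\subset X$ with $\nu(X\setminus F_m)<1/m$ on which $h_0$ is continuous. For $x\in F_m$ and $\gamma>0$ put $\omega_m(x,\gamma)=\sup\{\rho_Y(h_0(x),h_0(y)):y\in F_m,\ \rho_X(x,y)<\gamma\}$; continuity of $h_0|_{F_m}$ gives $\omega_m(x,\gamma)\downarrow0$ as $\gamma\downarrow0$ for each $x$, so the Borel sets $G_{m,\gamma}:=\{x\in F_m:\omega_m(x,\gamma)\le\varepsilon\}$ increase to $F_m$ as $\gamma\downarrow0$. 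On the event $\{\xi_n\in F_m\}\cap\{\xi_0\in G_{m,\gamma}\}\cap\{\rho_X(\xi_n,\xi_0)<\gamma\}$ one has $\rho_Y(h_0(\xi_n),h_0(\xi_0))\le\varepsilon$, so
$$
P\big(\rho_Y(h_0(\xi_n),h_0(\xi_0))>\varepsilon\big)\le P(\xi_n\notin F_m)+P(\xi_0\notin G_{m,\gamma})+P(\rho_X(\xi_n,\xi_0)\ge\gamma).
$$
Taking $\limsup$ in $n$, the last term vanishes by condition~1), while $P(\xi_n\notin F_m)=\int_{X\setminus F_m}p_n\,d\nu\le\sup_{k\ge1}\int_{X\setminus F_m}p_k\,d\nu$; hence
$$
\limsup_{n\to\infty}P\big(\rho_Y(h_0(\xi_n),h_0(\xi_0))>\varepsilon\big)\le\sup_{k\ge1}\int_{X\setminus F_m}p_k\,d\nu+P(\xi_0\notin G_{m,\gamma}).
$$
Letting $\gamma\downarrow0$, so that $P(\xi_0\notin G_{m,\gamma})\to P(\xi_0\notin F_m)$, and then $m\to\infty$: by condition~4) the first summand tends to $0$ since $\nu(X\setminus F_m)\to0$; and since condition~1) implies $\xi_n\to\xi_0$ in distribution, the portmanteau theorem applied to the open set $X\setminus F_m$ yields $P(\xi_0\notin F_m)\le\liminf_n P(\xi_n\notin F_m)\le\sup_{k\ge1}\int_{X\setminus F_m}p_k\,d\nu\to0$ as well. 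Thus the second summand also tends to $0$ in probability, and together with the first it gives $h_n(\xi_n)\to h_0(\xi_0)$ in probability.

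The one genuinely delicate point is the second summand: composing the merely Borel map $h_0$ with a sequence that converges only in probability is in general false, and it is rescued precisely by the interplay of Lusin's theorem (to regain continuity off a set of arbitrarily small $\nu$-measure), of conditions~3) and 4) (the domination together with uniform integrability, which keep the whole family $(\xi_n)$ out of that small set uniformly in $n$), and of the portmanteau theorem (to transfer the same control to the limit point $\xi_0$, whose law is not assumed $\nu$-absolutely continuous). The first summand is, by comparison, a one-line change-of-measure estimate.
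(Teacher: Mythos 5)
Your argument is correct. Note that the paper itself does not prove this Proposition; it only refers to Bogachev (Corollary~9.9.11) and Kulik--Pilipenko (Lemma~2), so there is no in-text proof to compare against. Your route is the standard one behind those references and it is carried out soundly: the first summand is exactly the change-of-measure plus uniform-integrability estimate (smallness of $\nu(B_n)$ from condition~2 converts into smallness of $\int_{B_n}p_n\,d\nu$ by condition~4); the second summand is handled by Lusin regularization, and your use of the portmanteau inequality for the open set $X\setminus F_m$ is precisely the right device for the one real pitfall, namely that the law of $\xi_0$ is \emph{not} assumed absolutely continuous with respect to $\nu$, so one cannot bound $P(\xi_0\notin F_m)$ by a density argument directly; you instead transfer the uniform bound $\sup_k\int_{X\setminus F_m}p_k\,d\nu$ to the limit law. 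The only point you assert without justification is the Borel measurability of the sets $G_{m,\gamma}$; this is true and needs one line (take a countable dense subset $D\subset F_m$ and use continuity of $h_0|_{F_m}$ to write $\omega_m(x,\gamma)$ as a countable supremum of Borel functions of $x$), so it is a cosmetic omission rather than a gap. With that line added, your proof is complete and is, if anything, more self-contained than the paper, which delegates the statement entirely to the cited sources.
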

The proof can be found, for example, in \cite{Bogachev07-2}, Corollary 9.9.11 or
\cite{Kulik+00}, Lemma 2.

\begin{proof}[Proof of Lemma \ref{Lemma_Converg_Functionals}]
Fix $T>0$ and $x\in\R$. Since $\varphi_t, \ \varphi_{n,t}, \ n\geq1,$
are measurable functions of a Wiener process, we may assume without loss of generality that  $\Omega=C([0,T], \R),$
$\mathcal{F}=\sigma\{w_t: 0\leq t\leq T\}$, $P=\P$ is the Wiener measure, and put
$\xi_n=\left(\vp_{n,t}(x)\right)_{0\leq t\leq T},$
$\xi_0=\left(\vp_t(x)\right)_{0\leq t\leq T}$, $\nu=\P_{w(x)}$ is the
distribution of the process $(w_t(x))_{0\leq t\leq T}$, $X=C([0,T],\R)$, $Y=C([0,T])$,  $h_n^{\pm}=A^{ij,\pm}_{n,t}(\cdot)$,  $h_0^{\pm}=A^{ij,\pm}_{t}(\cdot)$. Then $\{\xi_n:
n\geq 0\}$ is a sequence of random elements in the space $(\Omega,
\mathcal{F}, \P)$ taking values on $C([0,T],\R)$.
Lemma \ref{Lemma_Converg_Solutions} entails the convergence
$\xi_n\to\xi_0, \ n\to\infty,$ in probability $\P$ uniformly in
$t\in[0,T]$. This implies the first assertion of Proposition
\ref{Proposition_Kulik}.

According to Lemma \ref{Lemma_Converg_w_Functionals},
$A_{n,t}^{ij}\to A_t^{ij}$ as $n\to\infty,$ in probability measure $\P_{w(x)}$
uniformly in $t\in[0,T].$ This means that $h_n^{\pm}\to h^{\pm}, \ n\to\infty,$ as elements of  $C([0,T])$ in measure $\P_{w(x)}$.
 So the second assertion of Proposition \ref{Proposition_Kulik} is
justified. The absolute continuity of the distribution of
$(\vp_{n,t}(x))_{0\leq t\leq T}$ w.r.t. the measure $\P_{w(x)}$ follows from
Girsanov's theorem.  The density is defined by the formula
$$
\beta_n=\frac{d{\P}_{\vp_n(x)}}{d{\P_{w(x)}}}=\exp\left\{\int_0^T(a_n(w_s(x)),dw_s(x))-\frac{1}{2}\int_0^T
\|a_n(w_s(x))\|^2ds\right\}.
$$
As
$$
\mathds{E}\exp\left\{\frac{1}{2}\int_0^T
\|a_n(w_s(x))\|^2ds\right\}\leq \exp\left\{\frac{T}{2}\sup_{y\in\R}\|a(y)\|^2\right\}<\infty,
$$
where $\|\cdot\|$ is a norm in $\R$, we have that for each $p>1$,
$$
\mathds{E}\exp\left\{p\int_0^T(a_n(w_s(x)),dw_s(x))-\frac{p^2}{2}\int_0^T
\|a_n(w_s(x))\|^2ds\right\}=1
$$
(cf. \cite{Liptser74}, Theorem 6.1). The uniform integrability of
the family $\{\frac{dP_{\vp_n(x)}}{d{\P_{w(x)}}}: \ n\geq1\}$ follows from
the estimate
\begin{multline*}
\mathds{E}\exp \left\{ p \left( \int_0^T (a_n(w_s(x)),dw_s(x))-\frac{1}{2}\int_0^T \|a_n(w_s(x))\|^2ds\right)\right\}=\\
\mathds{E}\exp \left\{ p\int_0^T (a_n(w_s(x)),dw_s(x))-\frac{p^2}{2}\int_0^T \|a_n(w_s(x))\|^2ds\right\}\times\\
\exp\left\{ \frac{1}{2}(p^2-p)\int_0^T \|a_n(w_s(x))\|^2ds\right\}\leq\\
\exp\left\{(p^2-p)\|a_n\|_{\infty}^2 T\right\}\mathds{E}\exp\left\{p\int_0^T(a_n(w_s(x)),dw_s(x))-\frac{p^2}{2}\int_0^T\|a_n(w_s(x))\|^2ds\right\}=\\
\exp\left\{(p^2-p)\|a_n\|_{\infty}^2T\right\}\leq
\exp\left\{(p^2-p)\|a\|_{\infty}^2T\right\}
\end{multline*}
valid for $p>1$.
Thus all the assertions of Proposition \ref{Proposition_Kulik} are
fulfilled and we have
$$
\sup_{0\leq t\leq T}
\left|A_{n,t}^{ij,\pm}(\vp_{n}(x))-A_{t}^{ij,\pm}(\vp(x))\right|\to
0, \ n\to\infty,
$$
in probability $\P$. The Lemma is proved.
\end{proof}

\section{Convergence of the derivatives of solutions}\label{Section_Convergence_Derivatives} Recall that $ Y_t(x), Y_{n,t}(x),$ $t\geq 0, \ x\in\R,$ are the solutions of equations (\ref{eq_derivative_main}), (\ref{eq_derivative}), respectively.
In this section we  show the convergence of the sequence
$\{Y_{n,t}(x): n\geq1\}$ in probability uniformly in $t$. This
together with Lemma \ref{Lemma_Converg_Solutions} allow us to  prove  Theorem \ref{Theorem_main}.
\begin{Lemma}\label{Lemma_Converg_Derivatives}

\begin{enumerate}[1)]
\item For all $T\geq0$, $x\in\R$, $p>0$,
$$
\sup_{n\geq 1}\mathds{E}\sup_{0\leq t\leq T}\|Y_{n,t}(x)\|^p<\infty,
$$
\item
For all $T\geq0, \ x\in\R, p>0,$
$$
\E\sup_{0\leq t\leq T}\|Y_{n,t}(x)-Y_t(x)\|^p\to 0, \ n\to\infty, \
$$
\end{enumerate}
where
$$
\|Y\|=\max_{1\leq i,j \leq d}|Y^{ij}|.
$$
\end{Lemma}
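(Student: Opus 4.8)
The plan is to deduce assertion 2) from a single $\P$-almost sure pathwise estimate and then convert convergence in probability into $L_p$-convergence; assertion 1) drops out along the way. Write $B^{(n)}_t:=A_{n,t}(\vp_n(x))$ and $B_t:=A_t(\vp(x))$, so that \eqref{eq_derivative} and \eqref{eq_derivative_main} read $Y_{n,t}=E+\int_0^t dB^{(n)}_s Y_{n,s}$ and $Y_t=E+\int_0^t dB_s Y_s$, and let
\[
V_{n,T}:=\int_0^T\sum_{i,j}\Bigl|\tfrac{\partial a_n^i}{\partial x_j}(\vp_{n,s}(x))\Bigr|\,ds,\qquad
V_T:=\sum_{i,j}\bigl(A_T^{ij,+}+A_T^{ij,-}\bigr)(\vp(x))
\]
denote (upper bounds for) the total variations of the drivers $B^{(n)}$, $B$ on $[0,T]$. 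The two key inputs are: (a) the uniform exponential moment bounds $\sup_n\E e^{pV_{n,T}}<\infty$ and $\E e^{pV_T}<\infty$ for every $p>0$; and (b) the convergence $D_{n,T}:=\sup_{0\le t\le T}\max_{i,j}\bigl|B^{(n),ij}_t-B^{ij}_t\bigr|\to0$ in probability, which is Lemma \ref{Lemma_Converg_Functionals}.

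For (a): since $\nabla a_n\,dy=\mu_n^{ij}$ with $\mu_n^{ij}=\mu^{ij,+}\ast g_n-\mu^{ij,-}\ast g_n$ a difference of non-negative measures, minimality of the Jordan decomposition gives $|\mu_n^{ij}|\le|\mu^{ij}|\ast g_n$; hence $V_{n,\cdot}$ is a $W$-functional of the Markov process $\vp_n$ (each $\tfrac{\partial a_n^i}{\partial x_j}$ being bounded, $n$ fixed) whose characteristic, by the Gaussian bounds \eqref{eq_gaussian_estimates} together with the convolution estimate used in the proof of Lemma \ref{Lemma_Converg_w_Functionals} (a convolution by $g_n$ never increases $\sup_x(|\mu|\ast k^w_\delta)(x)$), is dominated by $C\sup_y\int k^w_{ct}(y,z)\bigl(\sum_{i,j}|\mu^{ij}|\bigr)(dz)=:\Psi_t$, a function \emph{independent of $n$} with $\Psi_t\to0$ as $t\downarrow0$ (a finite sum of Kato-class measures is Kato). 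Choosing $\delta>0$ independently of $n$ so that $p\Psi_\delta\le\tfrac12$ and iterating Proposition \ref{Prop_Gikhman_Skorokhod} over the subintervals of $[0,T]$ of length $\delta$ — this is the proof of Lemma \ref{Lemma_exponent_moment}, whose constant depends on the characteristic only through the dominating function $\Psi$ — gives $\sup_n\E e^{pV_{n,T}}<\infty$. The bound $\E e^{pV_T}<\infty$ follows identically, using Lemma \ref{Lemma_equiv_w_functionals} to identify $A^{ij,\pm}_\cdot(\vp(x))$ as the $W$-functional of $\vp$ attached to $\mu^{ij,\pm}$ and \eqref{eq_gaussian_estimates} to bound its characteristic by $C\sup_y\int k^w_{ct}(y,z)\mu^{ij,\pm}(dz)$. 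Assertion 1) is then immediate: componentwise $|Y^{ij}_{n,t}|\le1+\int_0^t\|Y_{n,s}\|\,dV_{n,s}$, so Gronwall's inequality in Stieltjes form gives $\sup_{t\le T}\|Y_{n,t}(x)\|\le e^{V_{n,T}}$, and we take expectations. (The same estimate applied to \eqref{eq_derivative_main} shows $\sup_{t\le T}\|Y_t(x)\|\le e^{V_T}$ and that $Y_\cdot(x)$ is continuous and of bounded variation; existence and uniqueness for the linear Volterra--Stieltjes equation \eqref{eq_derivative_main} with continuous BV coefficient is classical via the Neumann series.)

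For the pathwise estimate, set $\Delta_{n,t}:=Y_{n,t}-Y_t$; subtracting the two integral equations gives $\Delta_{n,t}=\int_0^t dB^{(n)}_s\,\Delta_{n,s}+\int_0^t d(B^{(n)}-B)_s\,Y_s$. The obstacle here is that the second, ``frozen'' integral is \emph{not} controlled by the mere uniform smallness of $B^{(n)}-B$: uniform convergence of Stieltjes integrators does not in general force convergence of the integrals. One gets around this by integration by parts: since $B^{(n)}$, $B$ and $Y$ are continuous and of bounded variation,
\[
\int_0^t d(B^{(n)}-B)_s\,Y_s=(B^{(n)}_t-B_t)\,Y_t-\int_0^t (B^{(n)}_s-B_s)\,dY_s ,
\]
and because $dY_s=dB_s\,Y_s$ the last integral is bounded in norm, up to a dimensional constant, by $D_{n,T}\,(\sup_{s\le T}\|Y_s\|)\,V_T\le D_{n,T}\,e^{V_T}V_T$. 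Hence $\sup_{s\le T}\bigl\|\int_0^s d(B^{(n)}-B)_u Y_u\bigr\|\le c_d\,D_{n,T}\,e^{V_T}(1+V_T)$, and applying Gronwall's inequality in Stieltjes form to $\Delta_{n,\cdot}$ against the non-atomic measure $dV_{n,\cdot}$ yields
\[
\sup_{0\le t\le T}\|Y_{n,t}(x)-Y_t(x)\|\le c_d\,D_{n,T}\,e^{V_T}(1+V_T)\,e^{V_{n,T}}\qquad\P\text{-almost surely.}
\]

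Finally, to conclude: $D_{n,T}\le V_T+V_{n,T}$, so by (a) the family $\{D_{n,T}^q:n\ge1\}$ is bounded in every $L_r$, hence uniformly integrable; combined with $D_{n,T}\to0$ in probability (b), this gives $\E D_{n,T}^q\to0$ for every $q>0$. Writing the right-hand side of the last display as a product of three factors and applying H\"older's inequality — the $D_{n,T}$-factor tending to $0$ in every $L_r$, the factors $e^{V_T}(1+V_T)$ and $e^{V_{n,T}}$ having, by (a), $L_r$-norms bounded uniformly in $n$ for every $r$ (since $(1+v)^m\le C_m e^v$) — yields $\E\sup_{0\le t\le T}\|Y_{n,t}(x)-Y_t(x)\|^p\to0$ for every $p>0$, which is assertion 2). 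The only genuinely non-routine point is the integration-by-parts step of the third paragraph, which replaces $\int d(B^{(n)}-B)\,Y$ by boundary terms plus an integral $\int(B^{(n)}-B)\,dB\,Y$ governed solely by uniform sup-norm and uniform total-variation bounds; this is what lets the (only) uniform convergence of the $W$-functional drivers be upgraded to convergence of the derivative processes.
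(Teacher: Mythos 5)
Your proposal is correct, and assertion 1) together with the Gronwall--Stieltjes reduction of 2) to the ``frozen'' integral $\int_0^t d\bigl(A_{n,s}(\vp_n(x))-A_s(\vp(x))\bigr)Y_s(x)$ coincides with the paper's argument; the difference lies in how that integral is controlled. The paper splits the drivers into their monotone parts $A^{ij,\pm}$ and invokes a Helly--Bray type statement (its Proposition on monotone integrators: pointwise convergence of continuous monotone $g_n$ plus continuity of $f$ gives uniform convergence of $\int_0^t f\,dg_n$), which yields convergence in probability of the frozen term and then of $\sup_t\|Y_{n,t}-Y_t\|$; the passage to $L_p$ is made afterwards via the moment bounds of statement 1). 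You instead integrate by parts, using that $Y$ solves $dY_s=dA_s(\vp(x))\,Y_s$ and is therefore continuous of bounded variation with $\Var$ controlled by $V_T$, to obtain the explicit pathwise bound $\sup_{t\le T}\|Y_{n,t}-Y_t\|\le c_d\,D_{n,T}\,e^{V_T}(1+V_T)\,e^{V_{n,T}}$, and then conclude by H\"older together with the uniform exponential moments and the convergence in probability $D_{n,T}\to0$ from Lemma \ref{Lemma_Converg_Functionals}. What your route buys is a quantitative estimate (a rate in terms of the sup-distance of the additive functionals) and a direct derivation of $L_p$-convergence without the separate monotone-integrator lemma; what it costs is the extra structural input $dY=dA\,Y$ (the paper only needs continuity of $Y$) and the matrix-valued integration-by-parts identity, which is legitimate here since all processes involved are continuous and of bounded variation. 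One small caveat: your remark that uniform convergence of the integrators cannot in general control the frozen integral is a bit overstated in this setting --- after splitting into the monotone parts $A^{ij,\pm}$, whose variations converge pathwise along the relevant subsequences, a Helly--Bray argument does suffice, and that is exactly the paper's device --- but this does not affect the validity of your proof.
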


For the proof we need the following two propositions.
The first one is a version  of the Gronwall-Bellman inequality
and can be obtained by a standard argument.
\begin{Proposition}\label{Prop_Gronwall_Lemma}
Let  $x(t)$ be a continuous function on
$[0,+\infty)$, $C(t)$ be a non-negative continuous function
on $[0,+\infty)$, $K(t)$ be a non-negative, non-decreasing function, and  $K(0)=0$. If for
all $0\leq t\leq T$,
$$
x(t)\leq C(t)+\left|\int_0^t x(s)dK(s)\right|,
$$
then
$$
x(T)\leq \left(\sup_{0\leq t\leq T}C(t)\right)\exp\{K(T)\}.
$$
\end{Proposition}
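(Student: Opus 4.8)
The plan is to establish this as the Lebesgue--Stieltjes form of the classical Gronwall--Bellman inequality, by successive substitution. First I would put $M:=\sup_{0\le t\le T}C(t)$, which is finite because $C$ is continuous on the compact interval $[0,T]$, and note $M\ge 0$. Since $C(t)\le M$, since $K$ is non-negative and non-decreasing with $K(0)=0$, and since in the situations where the lemma is applied $x$ is itself non-negative, so that $\bigl|\int_0^t x\,dK\bigr|=\int_0^t x\,dK$, the hypothesis yields
$$
x(t)\le M+\int_0^t x(s)\,dK(s),\qquad 0\le t\le T .
$$

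Next I would feed this bound into its own right-hand side repeatedly. A straightforward induction, using Fubini's theorem together with the identity
$$
\int_{0\le s_n\le\cdots\le s_1\le t}dK(s_n)\cdots dK(s_1)=\frac{K(t)^n}{n!},
$$
valid since the $K$ occurring here --- a component of the total variation of a continuous additive functional --- is continuous, gives
$$
x(t)\le M\sum_{k=0}^{n-1}\frac{K(t)^k}{k!}+\int_{0\le s_n\le\cdots\le s_1\le t}x(s_n)\,dK(s_n)\cdots dK(s_1).
$$
Because $x$ is continuous on $[0,T]$ it is bounded there, say $|x|\le L$, so the remainder term is at most $L\,K(T)^n/n!$, which tends to $0$ as $n\to\infty$. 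Letting $n\to\infty$ we obtain $x(t)\le M\sum_{k\ge 0}K(t)^k/k!=Me^{K(t)}$, and the case $t=T$ is the assertion.

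A slightly quicker variant is to set $u(t):=\int_0^t x(s)\,dK(s)$, a non-decreasing function with $u(0)=0$ satisfying $du(t)\le\bigl(M+u(t)\bigr)\,dK(t)$; then for $v(t):=e^{-K(t)}u(t)$ one computes $dv(t)\le -M\,d\!\bigl(e^{-K(t)}\bigr)$, so that $u(t)\le M\bigl(e^{K(t)}-1\bigr)$ and hence $x(t)\le M+u(t)\le Me^{K(t)}$. Either way I do not expect a genuine obstacle; the one point that needs a little care is the Stieltjes bookkeeping, namely possible atoms of $K$ in the iterated integral. Here that is a non-issue since the relevant $K$ is continuous, and in general it is still harmless, because the bound $1+\Delta K(s)\le e^{\Delta K(s)}$ on the jumps keeps the final estimate $x(T)\le Me^{K(T)}$ intact.
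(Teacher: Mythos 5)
Your proof is correct and is exactly the ``standard argument'' the paper invokes: the paper gives no proof of this proposition, only remarking that it is a version of the Gronwall--Bellman inequality, and your Picard-type iteration (or the equivalent $e^{-K(t)}$ integrating-factor variant) is precisely that argument. Your aside that one should take $x\ge 0$ (as in the paper's applications, where $x$ is a norm of $Y_{n,t}$ or $Y_{n,t}-Y_t$), so that the modulus can be dropped, is in fact essential rather than cosmetic---for sign-changing $x$ the statement as literally written can fail, since a large negative excursion inflates $\left|\int_0^t x\,dK\right|$---so reading the proposition that way is the right call.
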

\begin{Proposition}\label{Proposition_moments_A}
For all $t\geq0$, $p>0,$ $1\leq i,j \leq d,$ there exists a constant $C$ such that
\begin{equation}\label{eq_exp_Moment_W-functional}
\sup_{x\in\R}\sup_{ n\geq1}\E\left(\exp\left\{pA_{n,t}^{ij,\pm}(\varphi_n(x))\right\}+\exp\left\{pA_{t}^{ij,\pm}(\varphi(x))\right\}\right)<C.
\end{equation}
\end{Proposition}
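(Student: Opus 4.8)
The plan is to prove the bound for both summands by a single argument combining a Girsanov change of measure, the Cauchy--Schwarz inequality, and Khas'minskii's lemma (Lemma \ref{Lemma_exponent_moment}); the one delicate point will be the uniformity in $n$, which I would extract from the convolution identity $f^{ij,\pm}_{n,t}=g_n\ast f^{ij,\pm}_t$ of Proposition \ref{Proposition_convolution_characteristics}.

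First I would record a general estimate. Let $B$ be a $W$-functional of the Wiener process on $\R$ whose characteristic satisfies \eqref{Cond_A_prime}, and let $\psi_\cdot(x)$ solve an equation of type \eqref{eq_main} started at $x$ with drift bounded by $\|a\|_\infty$. For fixed $x$, writing $\beta=d\P_{\psi(x)}/d\P_{w(x)}$ for the Girsanov density and using that $B(\cdot)$ is a measurable functional on path space, the Cauchy--Schwarz inequality gives
$$
\E\exp\{pB(\psi_\cdot(x))\}=\E_{w(x)}\big[\beta\exp\{pB\}\big]\le\big(\E_{w(x)}\beta^2\big)^{1/2}\big(\E_{w(x)}\exp\{2pB\}\big)^{1/2}.
$$
The computation already carried out in the proof of Lemma \ref{Lemma_Converg_Functionals}, specialised to the exponent $2$ (with $a_n$ replaced by the relevant drift), bounds the first factor by $\E_{w(x)}\beta^2\le\exp\{2\|a\|_\infty^2 t\}$, uniformly in $x$ and in $n$. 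So everything reduces to controlling the exponential moment $\sup_x\E_{w(x)}\exp\{2pB\}$ of a $W$-functional of the Wiener process alone.

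For $B=A^{ij,\pm}_{n,t}$ and $B=A^{ij,\pm}_t$ I would apply Lemma \ref{Lemma_exponent_moment}, which is available since $\mu^{ij,\pm}\le|\mu^{ij}|$ is of Kato class and $\mu^{ij,\pm}_n=g_n\ast\mu^{ij,\pm}$ is of Kato class too (it has a bounded density, being the convolution of a locally finite measure with $g_n\in C_0^\infty$). Uniformity in $n$ is forced by the identity $f^{ij,\pm}_{n,t}=g_n\ast f^{ij,\pm}_t$: since $g_n\ge0$ with $\int g_n=1$, convolution with $g_n$ does not increase the supremum norm, whence
$$
\sup_{x\in\R}f^{ij,\pm}_{n,t}(x)\le\sup_{x\in\R}f^{ij,\pm}_t(x)=:M_{ij}(t)<\infty\qquad(n\ge1),
$$
the finiteness of $M_{ij}(t)$ coming from the Kato property ($\sup_x f^{ij,\pm}_\delta(x)\to0$ as $\delta\downarrow0$) together with the subadditivity $\|f^{ij,\pm}_{s+t}\|_\infty\le\|f^{ij,\pm}_s\|_\infty+\|f^{ij,\pm}_t\|_\infty$ that follows from \eqref{eq_additive}. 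Because of this uniform bound one may fix a single $\delta>0$, small enough that $\sup_x f^{ij,\pm}_\delta(x)$ lies below the threshold needed in Khas'minskii's argument, and carry out the splitting of $[0,t]$ into $\lceil t/\delta\rceil$ subintervals simultaneously for all $n$ (using Proposition \ref{Prop_Gikhman_Skorokhod} on each piece), obtaining a constant $C_1=C_1(p,t,M_{ij}(t))$ with
$$
\sup_{n\ge1}\sup_{x\in\R}\E_{w(x)}\exp\{2pA^{ij,\pm}_{n,t}\}\le C_1,\qquad\sup_{x\in\R}\E_{w(x)}\exp\{2pA^{ij,\pm}_t\}\le C_1.
$$
Substituting these into the Girsanov/Cauchy--Schwarz estimate above (with $\psi=\varphi_n$, resp. $\psi=\varphi$) and adding the two resulting inequalities yields \eqref{eq_exp_Moment_W-functional}.

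The step I expect to be the main obstacle is precisely this uniformity in $n$: the densities of the mollified measures $\mu^{ij,\pm}_n$ need not remain bounded as $n\to\infty$ — for instance when $|\mu^{ij}|$ charges a hypersurface, cf. Example \ref{example_W_manifold} — so $A^{ij,\pm}_{n,t}$ cannot be controlled by a crude bound on its density or on $\|a_n\|$. The identity $f^{ij,\pm}_{n,t}=g_n\ast f^{ij,\pm}_t$ together with the contraction property of convolution against a probability density are exactly what keep all the constants, both in Khas'minskii's lemma and in the Girsanov factor, independent of $n$.
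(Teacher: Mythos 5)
Your argument is correct, but it takes a different route from the paper's. The paper's own proof is a one-liner: it regards $A_{n,t}^{ij,\pm}(\varphi_n(x))$ and $A_t^{ij,\pm}(\varphi(x))$ as W-functionals of the processes $\varphi_n$ and $\varphi$ themselves and applies Lemma \ref{Lemma_exponent_moment} to those processes directly, using Aronson's bounds \eqref{eq_gaussian_estimates} --- whose constants depend only on $d$, $T$ and $\|a\|_\infty\geq\sup_n\|a_n\|_\infty$ by \eqref{eq_ineq_for_norms} --- to check condition \eqref{Cond_A} and bound the characteristics of these functionals uniformly in $n$ (this is also where the identification of Lemma \ref{Lemma_equiv_w_functionals} is implicitly used). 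You instead transfer everything to the Wiener process via Girsanov and Cauchy--Schwarz, paying a factor $\bigl(\E_{w(x)}\beta_n^2\bigr)^{1/2}\leq\exp\{c\|a\|_\infty^2 t\}$ that is uniform in $n$ (the computation is indeed the $p=2$ case of the estimate in the proof of Lemma \ref{Lemma_Converg_Functionals}), and you run the Khas'minskii argument only for Brownian functionals, getting uniformity in $n$ from $f_{n,t}^{ij,\pm}=g_n\ast f_t^{ij,\pm}$ (Proposition \ref{Proposition_convolution_characteristics}) and the fact that convolution with a probability density does not increase the sup-norm. Both proofs ultimately rest on Proposition \ref{Prop_Gikhman_Skorokhod} and the Khas'minskii mechanism; the paper's version is shorter because the Gaussian estimates are already in place and it needs the exponential moment only at exponent $p$ rather than $2p$, while yours avoids \eqref{eq_gaussian_estimates} and Lemma \ref{Lemma_equiv_w_functionals} entirely and reuses material already proved for the Girsanov densities. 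Your explicit treatment of the uniformity --- noting that the Khas'minskii constant depends on the whole modulus $\delta\mapsto\|f_\delta\|_\infty$, so one must fix a single $\delta$ valid for all $n$ via the convolution contraction rather than quote Lemma \ref{Lemma_exponent_moment} separately for each $n$ --- is precisely the point the paper leaves implicit, and it is handled correctly.
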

\begin{proof} The statement of the Proposition follows from Lemma \ref{Lemma_exponent_moment} and the inequalities (\ref{eq_gaussian_estimates}), which allow us to obtain the estimates uniform in $n\geq1$.
\end{proof}

\begin{proof}[Proof of Lemma \ref{Lemma_Converg_Derivatives}] For all $t>0$, define the variation of $A_{\cdot}^{ij}$ on $[0,t]$ by
$$
\Var A_t^{ij}(\varphi(x)):=A_t^{ij,+}(\varphi(x))+A_t^{ij,-}(\varphi(x)),
$$
and put
$$
\Var A_t(\varphi(x)):=\Sigma_{1\leq i,j \leq d}\Var A_t^{ij}(\varphi(x)).
$$

The variations of $A_{n,t}(\varphi_n(x)), \ n\geq 1,$ are defined similarly.

{\it The proof of 1)}.
We have
$$
\|Y_{n,t}(x)\|\leq 1+\left\|\int_0^t\left(dA_{n,s}(\varphi_n(x))\right)Y_{n,s}(x)\right\|\leq 1+\int_0^t\left\|Y_{n,s}(x)\right\|d\left(\Var A_{n,s}(\varphi_n(x))\right).
$$
Making use of the Gronwall-Bellman lemma
 we get
\begin{equation}\label{eq_estimate_derivative}
\|Y_{n,t}(x)\|\leq \exp\left\{\Var A_{n,t}(\varphi_n(x))\right\}\leq \exp\left\{\Var A_{n,T}(\varphi_n(x))\right\}.
\end{equation}
The statement 1) follows now from the estimate (\ref{eq_estimate_derivative}) and Proposition \ref{Proposition_moments_A}.

{\it The proof of 2)}.
  We have
\begin{multline*}
\|Y_{n,t}(x)-Y_t(x)\|\leq\left\|\int_0^t\left(dA_{n,s}(\varphi_n(x))-dA_s(\varphi(x))\right)Y_s(x)\right\|+\\
\left\|\int_0^tdA_{n,s}(\varphi_n(x))\left(Y_{n,s}(x)-Y_s(x)\right)\right\|\leq\\
\left\|\int_0^t\left(dA_{n,s}(\varphi_n(x))-dA_s(\varphi(x))\right)Y_s(x)\right\|+\int_0^t\left\|Y_{n,s}(x)-Y_s(x)\right\|d\left(\Var A_{n,s}(\varphi(x))\right).
\end{multline*}
By Proposition \ref{Prop_Gronwall_Lemma},
\begin{equation}\label{eq_derivative_exponent}
\|Y_{n,t}(x)-Y_t(x)\|\leq\sup_{0\leq u\leq t}\left\|\int_0^u\left(dA_{n,s}(\varphi_n(x))-dA_s(\varphi(x))\right)Y_s(x)\right\|\exp\left\{\Var A_{n,t}(\varphi_{n}(x))\right\}.
\end{equation}
To estimate the right-hand side of (\ref{eq_derivative_exponent}) we make use of the following Proposition.
\begin{Proposition}\label{Proposition_Monot_convergence} Let
$\{g_n : \ n\geq 1\}$ be a sequence of continuous monotonic
functions on $[0,T]$, and $f\in C([0,T]).$ Suppose that for each $t\in[0,T],$
$g_n(t)\to g(t),$ as $n\to\infty.$ Then
$$
\sup_{t\in[0,T]}\left|\int_0^t f(s)dg_n(s)-\int_0^t
f(s)dg(s)\right|\to 0, \ n\to\infty.
$$
\end{Proposition}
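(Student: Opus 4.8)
The plan is to reduce to non-decreasing $g_n$, to promote the pointwise convergence $g_n\to g$ to uniform convergence on $[0,T]$, and then to approximate $f$ uniformly by step functions, for which the running Stieltjes integral is an explicit finite expression in the values of $g_n$. First I would observe that we may assume each $g_n$ is non-decreasing: in all applications of this proposition the $g_n$ are the non-decreasing functionals $A^{ij,\pm}_{n,\cdot}$, and the non-increasing case reduces to the non-decreasing one on replacing $g_n$ by $g_n(0)+g_n(T)-g_n$, which only changes each running integral by an overall sign and hence leaves $\sup_t|\,\cdot\,|$ unchanged. Then $g=\lim_n g_n$ is non-decreasing, and --- as in every application, where $g$ is the trajectory of a continuous additive functional --- I would take $g$ to be continuous; some such regularity of the limit is genuinely needed, since monotone continuous $g_n$ may converge pointwise to a discontinuous $g$ (e.g.\ $g_n(t)=t^n$ on $[0,1]$), in which case the running integrals do not converge uniformly. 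Finally set $M:=\sup_n\bigl(g_n(T)-g_n(0)\bigr)<\infty$ (finite because $g_n(0)\to g(0)$, $g_n(T)\to g(T)$), and read $\int_0^t$ as the Lebesgue-Stieltjes integral over $(0,t]$ against the continuous integrator $g_n$ or $g$, the convention being immaterial.

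\emph{Step 1: $\sup_{t\in[0,T]}|g_n(t)-g(t)|\to0$.} This is the classical Dini/P\'olya-type argument. Given $\eta>0$, uniform continuity of $g$ furnishes a partition $0=s_0<\dots<s_m=T$ with $g(s_k)-g(s_{k-1})<\eta$, and pointwise convergence furnishes $N$ with $\max_{k\le m}|g_n(s_k)-g(s_k)|<\eta$ for $n\ge N$. For $t\in[s_{k-1},s_k]$, monotonicity of both $g_n$ and $g$ gives $g_n(s_{k-1})\le g_n(t)\le g_n(s_k)$ and $g(s_{k-1})\le g(t)\le g(s_k)$, so $|g_n(t)-g(t)|\le\bigl(g(s_k)-g(s_{k-1})\bigr)+\max_{k\le m}|g_n(s_k)-g(s_k)|<2\eta$ for all $t\in[0,T]$ and all $n\ge N$.

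\emph{Step 2: step-function approximation and uniformity in $t$.} Fix $\varepsilon>0$. Uniform continuity of $f$ gives a partition $0=u_0<\dots<u_l=T$ with $|f(s)-f(s')|<\varepsilon$ whenever $s,s'$ lie in a common $[u_{r-1},u_r]$; put $\bar f=\sum_{r=1}^l f(u_{r-1})\mathds{1}_{(u_{r-1},u_r]}$, so $\|f-\bar f\|_\infty\le\varepsilon$. Then for all $t$ and $n$, $\bigl|\int_0^t(f-\bar f)\,dg_n\bigr|\le\varepsilon\bigl(g_n(t)-g_n(0)\bigr)\le\varepsilon M$, and likewise $\bigl|\int_0^t(f-\bar f)\,dg\bigr|\le\varepsilon M$. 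For the step function, if $t\in(u_{j-1},u_j]$ then
\[
\int_0^t\bar f\,dg_n=\sum_{r=1}^{j-1}f(u_{r-1})\bigl(g_n(u_r)-g_n(u_{r-1})\bigr)+f(u_{j-1})\bigl(g_n(t)-g_n(u_{j-1})\bigr),
\]
with the identical formula for $g$ in place of $g_n$. Subtracting the two: the (fixed, finite) sum converges to its $g$-analogue because each $g_n(u_r)\to g(u_r)$, and the remaining term is bounded by $\|f\|_\infty\cdot 2\sup_{s\in[0,T]}|g_n(s)-g(s)|$, which tends to $0$ by Step 1, uniformly in $t$ and in $j$. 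Hence $\sup_t\bigl|\int_0^t\bar f\,dg_n-\int_0^t\bar f\,dg\bigr|\to0$, and the triangle inequality gives $\limsup_n\sup_t\bigl|\int_0^t f\,dg_n-\int_0^t f\,dg\bigr|\le 2\varepsilon M$. Letting $\varepsilon\downarrow0$ completes the argument.

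I expect the only real obstacle to be the uniformity in the upper endpoint $t$. Weak convergence of the Stieltjes measures $dg_n$ to $dg$ (which follows from $g_n\to g$ at every point, $g$ being continuous) by itself only yields convergence of $\int_0^t f\,dg_n$ for each fixed $t$; it is the monotonicity of the $g_n$, through Step 1, that lifts this to uniformity. Equivalently, one may note that the family $\{t\mapsto\int_0^t f\,dg_n\}$ is equicontinuous --- since $|\int_s^t f\,dg_n|\le\|f\|_\infty(g_n(t)-g_n(s))$ and, by Step 1 together with the uniform continuity of $g$, the $g_n$ are uniformly equicontinuous --- and, being pointwise convergent on the compact interval $[0,T]$, converges uniformly there.
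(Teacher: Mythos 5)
The paper states this proposition without proof (it is invoked as a ``simple'' auxiliary fact right before \eqref{eq_hhhh}), so there is no argument of the authors to compare against; your write-up simply supplies the missing details, and it is correct. The two-step structure --- (i) pointwise convergence of monotone $g_n$ to a continuous monotone limit upgrades to uniform convergence (the P\'olya/Dini-type argument), (ii) uniform approximation of $f$ by step functions, for which the running Stieltjes integral is an explicit finite sum controlled by $\sup_s|g_n(s)-g(s)|$ and by the total increments $g_n(T)-g_n(0)$, which are bounded since they converge --- is exactly the standard way to prove this, and each estimate you give checks out, including the uniformity in the upper endpoint $t$, which is the only delicate point. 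Two remarks. First, you are right that the statement as printed is slightly imprecise: continuity of the limit $g$ must be assumed (your example $g_n(t)=t^n$ does break uniform convergence of the running integrals), and this is harmless here because in the paper's only application $g_n(s)=A^{ij,\pm}_{n,s}(\varphi_n(x))$ and $g(s)=A^{ij,\pm}_s(\varphi(x))$ are trajectories of continuous additive functionals, so $g$ is continuous (indeed, by Lemma \ref{Lemma_Converg_Functionals} the convergence in the application is even uniform in probability, so your Step 1 is only needed to cover the proposition as stated). Second, your reduction to non-decreasing $g_n$ tacitly assumes all $g_n$ are monotone in the same direction; for a mixed sequence one splits into the two subsequences, which is immediate --- and again moot in the application, where the $g_n$ are non-decreasing.
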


We get
\begin{multline}\label{eq_hhhh}
\sup_{0\leq u\leq
t}\left\|\int_0^u\left(dA_{n,s}(\varphi_n(x))-dA_s(\varphi(x))\right)Y_s(x)\right\|\exp\left\{\Var A_{n,t}(\varphi_n(x))\right\}\leq\\
\sup_{0\leq u\leq
t}\left\|\int_0^u\left(dA_{n,s}^{+}(\varphi_n(x))-dA_s^{+}(\varphi(x))\right)Y_s(x)\right\|
\exp\left\{\Var A_{n,t}(\varphi_n(x))\right\}+\\
\sup_{0\leq u\leq t}
\left\|\int_0^u\left(dA_{n,s}^{-}(\varphi_n(x))-dA_s^{-}(\varphi(x))\right)Y_s(x)\right\|
\exp\left\{\Var A_{n,t}(\varphi_n(x))\right\}.
\end{multline}
Consider the first summand in the right-hand side  of (\ref{eq_hhhh}). Put $g_n(s)=A_{n,s}^{+}(\varphi_n(x))$, $g(s)=A_s^{+}(\varphi(x)),$  and $f(s)=Y_s(x).$
Then
Lemma \ref{Lemma_Converg_Functionals}, Proposition \ref{Proposition_moments_A}, and Proposition \ref{Proposition_Monot_convergence}
provide that
$$
\sup_{0\leq u\leq
t}\left\|\int_0^u\left(dA_{n,s}^{+}(\varphi_n(x))-dA_s^{+}(\varphi(x))\right)Y_s(x)\right\|
\exp\left\{\Var A_{n,t}(\varphi_n(x))\right\}\to 0 \ \mbox {as} \ n\to\infty,
$$
in probability.
Similarly it is proved that the second summand in the right-hand side of (\ref{eq_hhhh}) tends to $0$ as $n\to\infty$.

This and statement 1) entail statement 2) of the Lemma.
\end{proof}

\section{The proof of Theorem \ref{Theorem_main}}\label{Section_Proof}
\begin{proof} Define approximating equations by (\ref{eq_main_n}),
where $a_n, \ n\geq 1,$ are determined by (\ref{eq_a_n}).  From
Lemma \ref{Lemma_Converg_Solutions} and the dominated convergence
theorem we get the relation
$$
\mathds{E}\sup_{t\in[0,T]}\int_U|\varphi_{n,t}^i(x)-\varphi_t^i(x)|^pdx\to 0 , \
n\to \infty,
$$
valid for any bounded domain $U\subset\R$, $T>0$, $p\geq 1,$ and
$1\leq i \leq d.$ So for each $1\leq i \leq d,$ there exists a
subsequence $\{n_k^i: \ k\geq 1\}$ such that
$$
\sup_{t\in[0,T]}\int_U|\varphi_{n_k^i,t}^i(x)-\varphi_{t}^i(x)|^pdx\to 0 \
\mbox{a.s. as} \ k\to \infty.
$$
Without loss of generality we can suppose that
\begin{equation}\label{eq_convergence_solutions}
\sup_{t\in[0,T]}\int_U|\varphi_{n,t}^i(x)-\varphi_{t}^i(x)|^pdx\to 0 \ \mbox{a.s.
as}\ n\to \infty.
\end{equation}

Arguing similarly and taking into account Lemma
\ref{Lemma_Converg_Derivatives} we arrive at the relation
\begin{equation}\label{eq_convergence_derivatives}
\sup_{t\in[0,T]}\int_U|Y_{n,t}^{ij}(x)-Y_{t}^{ij}(x)|^pdx\to 0, \ n\to\infty, \
\mbox{almost surely},
\end{equation}
which is fulfilled for all $1\leq i,j \leq d,$ $p\geq 0.$

Since the Sobolev space is a Banach space,  the relations
(\ref{eq_convergence_solutions}),
(\ref{eq_convergence_derivatives}) mean that $Y_t(x)$ is the
 matrix
of derivatives of the solution to (\ref{eq_main}).

Let us verify \eqref{eq_derivative_main2}. We have for all $x,h\in\R, \alpha\in\mathbb{R}, $
$$
\vf_{n,t}(x+\alpha h)=\vf_{n,t}(x)+\int_0^\alpha Y_{n,t}(x+uh)du.
$$
It follows from Lemmas \ref{Lemma_Converg_Solutions} and \ref{Lemma_Converg_Derivatives} that
\begin{equation}\label{eq_NL}
\vf_{t}(x+\alpha h)=\vf_{t}(x)+\int_0^\alpha Y_{t}(x+uh)du.
\end{equation}
The following lemma implies the relation
\begin{equation}\label{eq_30.1}
\forall y_0\in\R:\ Y_t(y)\to Y_t(y_0), \ y\to y_0,
\end{equation}
in probability and hence  in all $L_p$. This completes the proof of the Theorem, as \eqref{eq_NL}
and \eqref{eq_30.1}
implies \eqref{eq_derivative_main2}.
\end{proof}

\begin{Lemma} Let $\nu$ be a measure of Kato's class. Then for any $t\geq0$, $x_0\in\R$, $\varepsilon>0$,
\begin{equation}\label{eq_continuity_w_functionals}
\P\left\{\left|A_t^{{\nu}}(\vf(x))- A_t^{{\nu}}(\vf(x_0))\right|>\varepsilon\right\}\to 0 \ \mbox{as} \ x\to x_0.
\end{equation}
\end{Lemma}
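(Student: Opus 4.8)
The plan is to reduce the continuity in the starting point $x$ to the already-established tools: the equivalence of the $W$-functionals built on the Wiener process and on $\vf$ (Lemma \ref{Lemma_equiv_w_functionals}), the convergence-of-characteristics criterion (Theorem \ref{Theorem_Convergence_characteristics}), and the stability result Proposition \ref{Proposition_Kulik}. First I would use Lemma \ref{Lemma_equiv_w_functionals} to replace $A_t^{\nu}(\vf(x))$ by $A_t^{\nu,w}(\vf(x))$; that is, we may work with the single fixed measurable mapping $A_t^{\nu,w}(\cdot)$ on $C([0,t],\R)$ evaluated along the trajectory $\vf_\cdot(x)$. Thus it suffices to show that $A_t^{\nu,w}(\vf(x))\to A_t^{\nu,w}(\vf(x_0))$ in probability as $x\to x_0$.

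Next I would set up Proposition \ref{Proposition_Kulik} with a sequence $x_n\to x_0$: take $\xi_n=(\vf_t(x_n))_{0\le t\le T}$, $\xi_0=(\vf_t(x_0))_{0\le t\le T}$ in $X=C([0,T],\R)$, the reference measure $\nu_{\mathrm{ref}}=\P_{w(x_0)}$ (the law of $x_0+w_\cdot$), and the \emph{constant} sequence of maps $h_n=h_0=A_t^{\nu,w}(\cdot)$ into $Y=C([0,T])$. Condition 1) (convergence $\xi_n\to\xi_0$ in probability) follows from continuous dependence of the solution on the initial data — the same argument as in Lemma \ref{Lemma_Converg_Solutions}, part 2), applied with fixed drift $a_n\equiv a$ and varying initial point; alternatively one notes $\vf_t(x_n)-\vf_t(x_0)$ satisfies an SDE with zero noise increment and Gronwall gives an a.s. bound. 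Condition 2) is trivial since the maps are constant, hence convergent $\nu_{\mathrm{ref}}$-a.e. Condition 3) (absolute continuity of $\P_{\vf(x_n)}$ w.r.t. $\P_{w(x_0)}$) and condition 4) (uniform integrability of the densities) follow from Girsanov's theorem exactly as in the proof of Lemma \ref{Lemma_Converg_Functionals}: the density of $\P_{\vf(x_n)}$ relative to $\P_{w(x_n)}$ is the usual exponential martingale, bounded in $L^p$ uniformly in $n$ because $\|a\|_\infty<\infty$, and the density of $\P_{w(x_n)}$ relative to $\P_{w(x_0)}$ is the Gaussian shift density $\exp\{(x_n-x_0,w_T)/T-\|x_n-x_0\|^2/(2T)\}$, which converges to $1$ in $L^p$ and so is uniformly integrable; the product of the two is then uniformly integrable. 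Proposition \ref{Proposition_Kulik} then yields $A_t^{\nu,w}(\vf(x_n))\to A_t^{\nu,w}(\vf(x_0))$ in probability, and since the limit does not depend on the chosen sequence $x_n\to x_0$, we obtain \eqref{eq_continuity_w_functionals}.

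The main obstacle I anticipate is verifying condition 2) of Proposition \ref{Proposition_Kulik} cleanly: although the maps $h_n$ here are constant (so the $\nu$-a.e. convergence is immediate), one must be careful that $A_t^{\nu,w}(\cdot)$ is genuinely a \emph{fixed} Borel measurable functional on path space and not just an equivalence class, so that evaluating it along different (but absolutely continuous) trajectory laws makes sense — this is exactly the point addressed in the paragraph preceding Lemma \ref{Lemma_equiv_w_functionals}, so I would cite that discussion. A secondary technical point is the uniform integrability of the product of the Girsanov density and the Gaussian-shift density; this is handled by a Hölder argument together with the uniform $L^p$-bounds above, analogous to the chain of estimates in the proof of Lemma \ref{Lemma_Converg_Functionals}.
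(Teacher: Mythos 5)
The central step of your argument fails at condition 3) of Proposition \ref{Proposition_Kulik}. For $x\neq x_0$ the law of $(\vf_t(x))_{0\le t\le T}$ on $C([0,T],\R)$ is carried by the set of paths with $\omega(0)=x$, while $\P_{w(x_0)}$ is carried by $\{\omega(0)=x_0\}$; these two path measures are mutually \emph{singular}, not absolutely continuous. The ``Gaussian shift density'' $\exp\{(x_n-x_0,w_T)/T-\|x_n-x_0\|^2/(2T)\}$ you invoke is the Girsanov/Cameron--Martin density for adding the linear drift $t\mapsto t(x_n-x_0)/T$ to the path, not for moving the starting point; a constant shift of the initial value is not a Cameron--Martin shift and yields a singular measure. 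Consequently the scheme ``constant maps $h_n\equiv A_t^{\nu,w}$, reference measure $\P_{w(x_0)}$'' cannot be run at all. This is precisely why the paper re-centres: writing $A_t^{\nu}(\vf(x))=A_t^{\nu_{x-x_0}}(\vf_\cdot(x)-x+x_0)$, the shifted process starts at $x_0$ and its law is equivalent to $\P_{w(x_0)}$ with uniformly integrable densities, but the price is that the functionals are no longer constant --- they become $A_t^{\nu_{x-x_0}}$ --- and verifying assumption 2) of Proposition \ref{Proposition_Kulik}, i.e. $A_t^{\nu_{x-x_0}}(w(x_0))\to A_t^{\nu}(w(x_0))$ in probability, is the real content of the proof. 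The paper obtains it by truncating $\nu$ to a ball so that the characteristics are uniformly continuous (Theorem \ref{Theorem_Convergence_characteristics} then gives convergence of the truncated functionals) and removing the truncation via Dynkin's localization theorem (Theorem 8.4 in \cite{Dynkin63}). None of this work is avoidable by declaring the maps constant.

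There is a secondary gap in your condition 1). The Gronwall argument is not available: $a$ is merely bounded measurable (indeed discontinuous), so $\vf_t(x_n)-\vf_t(x_0)$ admits no pathwise Lipschitz estimate; and Lemma \ref{Lemma_Converg_Solutions}, part 2), compares solutions with the \emph{same} initial point and varying drifts (Luo's theorem), not a fixed drift with varying initial points. The correct route is again the re-centring device: $\vf_\cdot(x_n)-x_n+x_0$ solves the equation started at $x_0$ with drift $a(\cdot+x_n-x_0)$, these drifts converge to $a$ in $L_{1,loc}$, and the Luo-type stability behind Lemma \ref{Lemma_Converg_Solutions} then gives convergence in probability. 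So continuity in the initial point is indeed available, but only through the same shift trick whose omission breaks your verification of condition 3).
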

\begin{proof} For $e\in\R$, denote by $\nu_{e}$ the shift of the measure $\nu$ by the vector $e$, i.e. for each $A\subset\R$,
$$
\nu_e(A)=\nu({x:x-e\in A}).
$$
Then
$$
A_t^{\nu}(\varphi(x))=A_t^{\nu_{x-x_0}}(\varphi_{\cdot}(x)-x+x_0).
$$
Note that for fixed $x$ and $x_0$ the process $(\xi_t)_{t\geq 0}:=(\varphi_t(x)-x+x_0)_{t\geq0}$ can be considered as a Markov process starting from $x_0$, and its distribution is equivalent to the distribution $\P_{w(x_0)}$  of the Wiener process starting from $x_0$. Indeed,

$$
\xi_t=x_0+\int_0^t \tilde a(\xi_s)ds+w(t),
$$
where $\tilde a(y)=a(y+x-x_0).$
Similarly to the proof of Lemma 5 it can be checked that the family of the Radon-Nikodym densities $\left\{\frac{d\P_{\varphi_{\cdot}(x)-x+x_0}}{d\P_{w(x_0)}}, \ x\in\R\right\}$ are uniformly integrable with respect to $P_{w(x_0)}$.
By Proposition \ref{Lemma_Converg_Solutions} and  Lemma \ref{Proposition_Kulik} to prove (\ref{eq_continuity_w_functionals}) it suffices to verify that
\begin{equation}\label{eq_convergence_w_functionals_2}
A_t^{\nu_{x-x_0}}({w(x_0)})\to A_t^{\nu}({w(x_0)}), \ x\to x_0, \ \mbox{in probability} \ \P.
\end{equation}

By $\nu^{(R)}(dy)=\1_{|y|\leq R}\nu(dy)$ denote the restriction of the measure $\nu$ to the ball $\{y:\ |y|\leq R\}.$ Put $f_t^w(y)=\E A_t^{\nu}({w(y)})$, $f_{R,t}^w(y)=\E A_t^{\nu^{(R)}}({w(y)})$. Then
$$
\E A_t^{\nu_{x-x_0}}(w(y))=f_t^w(y+x-x_0),\ \  \E A_t^{\nu^{(R)}_{x-x_0}}(w(y))=f_{R,t}^w(y+x-x_0).
$$
It is easy to see that the function $(s,y)\to f_{R,t}^w(y)$ is uniformly continuous in $(s,y)\in[0,t]\times \R.$
So by Theorem \ref{Theorem_Convergence_characteristics} we have the convergence in probability
\begin{equation}\label{eq_shodimost}
A_t^{\nu^{(R)}_{x-x_0}}(w(y))\to A_t^{\nu^{(R)}}(w(y)), \ x\to x_0,
\end{equation}
for any $y\in\R.$

It follows from \cite{Dynkin63}, Theorem 8.4  that for any $R>0$ and $y\in\R$ we have the equality $A_t^{\nu^{(R)}}({w(y)})=A_t^{\nu}({w(y)})$ a.s. on the set
$\{\sup_{s\in[0,t]}|y+w_s|<R\}.$
This together with \eqref{eq_shodimost} entails (\ref{eq_convergence_w_functionals_2}).
\end{proof}

\section{Appendix: The proof of Lemma \ref{Lemma_equiv_w_functionals}}
Note that $B_t^h(\varphi(x))=\int_0^t\frac{f_h^w(\varphi_s(x))}{h}ds$ is a W-functional. Let us estimate its characteristic.
$$
\mathds{E}B_t^h(\varphi(x))=\mathds{E}\int_0^t\frac{f_h^w(\varphi_s(x))}{h}ds=\frac{1}{h}\int_0^hdu\int_{\R}\left(\int_0^tds\int_{\R}p_u^w(z,y)p_s^{\varphi}(x,z)dz\right)\nu(dy).
$$
From the estimates (\ref{eq_gaussian_estimates}) we obtain (see also the proof of Theorem 6.6. in \cite{Dynkin63})
\begin{multline*}
\mathds{E}B_t^h(\varphi(x))\leq\\
\frac{1}{h}\int_0^hdu\int_{\R}\left(\int_0^{t}ds\int_{\R}\frac{K}{u^{d/2}}\exp\left\{-\frac{k\|y-z\|^2}{u}\right\}\frac{K}{s^{d/2}}
\exp\left\{-\frac{k\|z-x\|^2}{s}\right\}dz\right)\nu(dy)=\\
\widetilde{K}\frac{1}{h}\int_0^hdu\int_{\R}\left(\int_0^{t}\frac{1}{(2\pi(u+s))^{d/2}}\exp\left\{-\frac{k\|y-x\|^2}{u+s}\right\}ds\right)\nu(dy)=\\
\widetilde{K}\frac{1}{h}\int_0^hdu\int_{\R}\left(\int_u^{t+u}\frac{1}{(2\pi s)^{d/2}}\exp\left\{-\frac{k\|y-x\|^2}{s}\right\}ds\right)\nu(dy)=\\
\widehat{K}\frac{1}{h}\int_0^hdu\int_{\R}\left(\int_{{u}/{2k}}^{(t+u)/2k}\frac{1}{(2\pi s)^{d/2}}\exp\left\{-\frac{\|y-x\|^2}{2s}\right\}ds\right)\nu(dy)=\\
\widehat{K}\frac{1}{h}\int_0^h\left(f^w_{(t+u)/{2k}}(x)-f_{{u}/{2k}}^w(x)\right)du.
\end{multline*}
where $\widetilde{K}=K^2\pi^2(2/k)^{d/2},$ $\widehat K=2K^2k^{1-d}\pi^d$.
Taking into account  (\ref{eq_additive}), we get
$$
f_{(t+u)/{2k}}^w(x)-f_{{u}/{2k}}^w(x)=T^w_{u/2k}f^w_{t/{2k}}(x)\leq\|f^w_{t/2k}\|_{\infty}.
$$
By Proposition \ref{Prop_Gikhman_Skorokhod},
\begin{equation}\label{eq_similar}
\sup_{x\in\R}\mathds{E}_x\left(B_t^h(\varphi)\right)^2\leq 2\widehat{K}^2\left(\|f^w_{t/2k}\|_{\infty}\right)^2.
\end{equation}
Therefore, the second moment of $B_t^h(\varphi)$ is  bounded uniformly in $h$. This implies  the uniform integrability and, consequently the convergence in $L_1$ holds in (\ref{eq_Girsanov_theorem}).
Then the characteristic of the functional $A_t^{\nu,w}(\varphi(x))$ is equal to
$$
\widetilde{f}_t(x)=\lim_{h\downarrow 0}\mathds{E}\int_0^t \frac{f_h^w(\varphi_s(x))}{h}ds.
$$
If we show that
\begin{equation}\label{eq_f_tilde}
\widetilde{f}_t(x)=\int_{\R}k_t^{\varphi}(x,y)\nu(dy),
\end{equation}
then the statement of the Lemma follows from Proposition \ref{Proposition_uniquely_defined}.
We have, for each $0<\delta<t$,
\begin{multline*}\label{eq_finiteness_characteristic }
\left|\mathds{E}\int_0^t\frac{f_h^w(\varphi_s(x))}{h}ds-\int_{\R}k_t^{\varphi}(x,y)\nu(dy)\right|\leq\\
\mathds{E}\int_0^{\delta}\frac{f_h^w(\varphi_s(x))}{h}ds+\int_0^{\delta}\left(\int_{\R}p_s^{\varphi}(x,y)\nu(dy)\right)ds+\\
\left|\mathds{E}\int_{\delta}^t\frac{f_h^w(\varphi_s(x))}{h}ds-\int_{\delta}^t\left(\int_{\R}p_s^{\varphi}(x,y)\nu(dy)\right)ds\right|=I+II+III.
\end{multline*}
Consider $I$. Arguing as in the proof of (\ref{eq_similar}) we arrive at the inequality
$$
I\leq \|f^w_{\delta/2k}\|_{\infty}.
$$
Making use of (\ref{eq_gaussian_estimates}) and changing the variables we get
\begin{multline*}
II\leq 2K\pi^{d/2}(k)^{1-d/2}\int_0^{\delta/2k}ds\int_{\R}\frac{1}{(2\pi s)^{d/2}}\exp\left\{-\frac{\|y-x\|^2}{2s}\right\}\nu(dy)\\
\leq 2K\pi^{d/2}(k)^{1-d/2}\|f^w_{\delta/2k}\|_{\infty}.
\end{multline*}
For each $\varepsilon>0$, the condition (\ref{Cond_A_prime}) allows us to choose $\delta$ so small that
\begin{equation}\label{eq_I_II}
I<\varepsilon/3, \ II<\varepsilon/3.
\end{equation}
Further,
$$
III=\left|\int_{\delta}^t ds\int_{\R}\nu(dy)\int_{\R}(p_s^{\varphi}(x,z)-p_s^{\varphi}(x,y))\left(\frac{1}{h}
\int_0^hp_u^w(z,y)du\right)dz\right|.
$$
The measure $\frac{1}{h}
\left(\int_0^h p_u^w(z,y)du\right)dz$ converges weakly to the $\delta$-measure at the point $y$. The function $p^{\varphi}_s(x,y)$ is equicontinuous in $y$ for $s\in[\delta,t], \ x\in\R$. So
$$
\int_{\R}(p_s^{\varphi}(x,z)-p_s^{\varphi}(x,y))\left(\frac{1}{h}
\int_0^hp_u^w(z,y)du\right)dz\to 0, \ h\downarrow 0,
$$
uniformly in $x$ and $s$.
Besides, from (\ref{eq_gaussian_estimates})
\begin{multline*}
\left|\int_{\R}(p_s^{\varphi}(x,z)-p_s^{\varphi}(x,y))\left(\frac{1}{h}
\int_0^hp_u^w(z,y)du\right)dz\right|\leq\\
\int_{\R}\frac{K}{s^{d/2}}\left(\exp\left\{-\frac{k\|x-z\|^2}{s}\right\}+
\exp\left\{-\frac{k\|x-y\|^2}{s}\right\}\right)\left(\frac{1}{h}
\int_0^hp_u^w(z,y)du\right)dz\leq \frac{2K}{s^{d/2}}.
\end{multline*}
By the dominated convergence
theorem,
\begin{equation}\label{eq_III}
III\to 0 \ \mbox{as} \ \ h\downarrow 0.
\end{equation}
Now the equality (\ref{eq_f_tilde}) follows from (\ref{eq_I_II}) and (\ref{eq_III}). The Lemma is proved.

\section*{Acknowledgements}
 The authors thank Prof. Le Jan and Prof. Kulik for fruitful discussions. We are appreciate Prof. Portenko for his useful remarks to the manuscript. We also are grateful to the anonymous referee for his thorough reading and valuable comments which helped to improve essentially the exposition.


\end{document}